\newcommand{\e}{\varepsilon}
\newcommand{\R}{\mathbb{R}}
\newcommand{\N}{\mathbb{N}}
\newcommand{\de}{\partial}
\newcommand{\weakto}{\rightharpoonup}
\renewcommand{\S}{\Sigma}
\newcommand{\deS}{{\de\Sigma}}
\newcommand{\Sph}{{\mathbb{S}^1}}
\newcommand{\D}{{\mathbb{D}^2}}
\newcommand{\X}{\mathbb{X}}
\newcommand{\hsp}{\hspace{0.2cm}}
\newcommand*{\modgrad}[1]{\left\vert \nabla #1\right\vert^2}
\newcommand{\beq }{\begin{equation}}
\newcommand{\eeq }{\end{equation}}
\newtheorem{theorem}{Theorem}[section]
\newtheorem{lemma}[theorem]{Lemma}
\newtheorem{proposition}[theorem]{Proposition}
\newtheorem{remark}[theorem]{Remark}
\newtheorem{corollary}[theorem]{Corollary}
\title[Prescribing Gaussian and geodesic curvature on disks]
{Prescribing Gaussian and geodesic curvature on disks}
\author{Sergio Cruz-Bl\'{a}zquez}
\address{Sergio Cruz-Bl\'{a}zquez, Scuola Normale Superiore, Piazza dei Cavalieri 7, 56126 Pisa, Italy.}
\email{sergio.cruzblazquez@sns.it }
\author{David Ruiz}
\address{David Ruiz, Departamento de An\'{a}lisis Matem\'{a}tico, Universidad de Granada, Campus Fuentenueva, 18071 Granada, Spain.}
\email{daruiz@ugr.es}
\thanks{D. R. have been supported by the Feder-Mineco Grant MTM2015-68210-P and by J. Andalucia (FQM116).} 
\thanks{S. C-B. is supported by the Marie Sklodowska-Curie fellowship of the Istituto Nazionale di Alta Matematica 713485.}
\keywords{Prescribed Gaussian curvature problem, variational methods, Moser-Trudinger inequality.}
\subjclass[2010]{35J20, 35R01, 53A30.}
\begin{document}

\maketitle

\begin{abstract} In this paper we consider the problem of prescribing the Gaussian and geodesic curvature on a disk and its boundary, respectively, via a conformal change of the metric. This leads us to a Liouville-type equation with a nonlinear Neumann boundary condition. We address the question of existence by setting the problem in a variational framework which seems to be completely new in the literature. We are able to find minimizers under symmetry assumptions.
\end{abstract}

\section{Introduction}

The problem of prescribing the Gaussian curvature on a compact surface $\Sigma$ under a conformal change of the metric is a classical one, and dates back to \cite{Ber, KazWar}. Let us denote by $g$ the original metric, $\tilde{g}$ the new one and $e^u$ the conformal factor (that is, $\tilde{g} = e^u g$). This problem reduces to solving the problem
$$ - \Delta_g u + 2 K_g = 2 K_{\tilde{g}}e^{u},$$
where $K_g$, $K_{\tilde{g}}$ denote the curvature with respect to $g$ and $\tilde{g}$, respectively. The solvability of this equation has been studied for a long time, and it is not possible to give here a comprehensive list of references.  

If $\Sigma$ has a boundary, then boundary conditions are in order. Homogeneous Dirichlet and Neumann boundary conditions have already been considered in the literature. In this paper our aim is to prescribe not only the Gaussian curvature in $\S$, but also the geodesic curvature on $\partial \S$. In this case we are led with the boundary value problem:
\begin{equation} \label{gg}
\left\{\begin{array}{ll}
-\Delta_{g} u +2K_g = 2 K_{\tilde{g}} e^u  \qquad & \text{in $\S$,}\\
\frac{\partial u}{\partial n} +2h_g = 2h_{\tilde{g}}e^{u/2} \qquad  &\text{on $\partial\S $,}
\end{array}\right.
\end{equation}
 where $h_g, h_{\tilde{g}}$ are the geodesic curvatures of $\partial \S$ relative to $g$, $\tilde{g}$, respectively.

Some versions of this problem have been studied in the literature. The case $h_{\tilde{g}}=0$ has been treated  by A. Chang and P. Yang in \cite{chang1}. Moreover, the case $K_{\tilde{g}}=0$ has been treated in \cite{chang2, li-liu, liuwang}. There is also some progress in the blow-up analysis, see \cite{bao, francesca}, although a complete description of the phenomenon is still missing. 

The case of constants $K_{\tilde{g}}$, $h_{\tilde{g}}$ has also been considered. For instance, Brendle (\cite{brendle}) uses a parabolic flow to show that this problem admits always a solution for some constant curvatures. By using complex analysis techniques, explicit expressions for the solutions and the exact values of the constants are determined if $\Sigma$ is a disk or an annulus, see \cite{otro, Asun}. The case of the half-plane has also been studied, see \cite{li-zhu, mira-galvez, zhang}. However, the case in which both curvatures are not constant has not been much considered. In \cite{cherrier}, some partial existence results are given, but they include a Lagrange multiplier which is out of control. Moreover, a Kazdan-Warner type of obstruction to existence has been found in \cite{hamza}. In a forthcoming work, the case of $K<0$ in domains different from the disk is treated, and also a blow-up analysis is performed, see \cite{LS-M-R}. At present, as far as we know, those are the only works considering non-constant curvatures.

The higher dimensional analogue of this question (that is, prescribing scalar curvature of a manifold and mean curvature of the boundary) has been more studied. The case of zero scalar curvature and constant mean curvature is known as the Escobar problem, in strong analogy with the Yamabe problem. In this regard, see \cite{amb, dja, escobar Annals, escobar Indiana, felli, HanLi1, HanLi2, marques}, and the references therein.

Integrating \eqref{gg} and applying the Gauss-Bonnet Theorem, one obtains
\begin{equation} \label{GB}
\int_{\Sigma}K_{\tilde{g}} e^u +  \int_{\partial \Sigma}h_{\tilde{g}} e^{u/2}=  2\pi\chi(\Sigma).
\end{equation}

In this paper we shall consider the case in which $\chi(\Sigma)=1$. By the Uniformization Theorem, we can pass via a conformal map to a disk, obtaining $K_{\tilde{g}}=0$, $h_{\tilde{g}}=1$. Taking this into account we can consider the problem:

\begin{align}
\left\lbrace \begin{array}{ll}
-\Delta u = 2Ke^u & \mbox{in } \D, \\
\frac{\partial u}{\partial \eta} + 2 = 2he^{u/2} & \mbox{on } \Sph,
\end{array}\right. \label{Probl}
\end{align}
where now $K$, $h$ are the curvatures to be prescribed. 

Generally speaking, the case of a disk is specially challenging because of the non-compact action of the group of conformal maps of the disk, as happens in the Nirenberg problem for $\S = \mathbb{S}^2$. This issue has been only treated in \cite{chang2} for $K=0$ (see also \cite{francesca}). A blow-up analysis in this case for non-constant $K$, $h$ is yet to be done, and will be the target of further research. In this paper, as a first step in the understanding of the problem, we shall impose symmetry conditions on $K$, $h$ in order to rule out this phenomenon. This idea goes back to Moser (\cite{moser2}) for the Nirenberg problem.

Let $G$ be a symmetry group of $\D$ without fixed points on $\Sph$, that is, for each $x\in\Sph$ there exists $g\in G$ such that $g(x)\neq x$. We say that a function $f$ is $G-$symmetric if $f(x)=f(g(x))$ for all $g \in G$ and for all $x$ in the domain of $f$.

Our main results is the following.

\begin{theorem} \label{TheoremNonNegative} Let $K: \D \to \R$, $h: \Sph \to \R$ be H\"{o}lder continuous, nonnegative and $G-$symmetric functions, not both of them identically equal to $0$. Then problem \eqref{Probl} admits a solution.
\end{theorem}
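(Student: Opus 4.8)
The plan is to cast \eqref{Probl} as a constrained variational problem on the space $\Hs$ of $G$-symmetric $H^1$-functions on the disk. The natural energy is
\[
I(u)=\frac12\int_{\D}\modgrad{u}+2\int_{\Sph}u-2\int_{\D}Ke^{u}-4\int_{\Sph}he^{u/2},
\]
whose critical points are exactly the weak solutions of \eqref{Probl}: integrating by parts, the interior terms give $-\Delta u=2Ke^{u}$ and the boundary terms give $\frac{\de u}{\de\eta}+2=2he^{u/2}$. By the Moser--Trudinger inequality and its trace counterpart, $e^{u}\in L^{p}(\D)$ and $e^{u/2}\in L^{p}(\Sph)$ for every $p$, so $I\in C^{1}(\Hs)$. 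However $I$ is unbounded below (e.g.\ along suitable constants), so direct minimization fails. Instead, observing that the Gauss--Bonnet identity \eqref{GB} forces $\int_{\D}Ke^{u}+\int_{\Sph}he^{u/2}=2\pi$ on every solution, I would minimize $I$ on
\[
\mathcal{N}=\Big\{u\in\Hs:\ \Phi(u):=\int_{\D}Ke^{u}+\int_{\Sph}he^{u/2}=2\pi\Big\}.
\]
Since $K,h\geq0$ are not both identically zero, $\Phi$ evaluated on constants is continuous, strictly increasing and covers $(0,+\infty)$, so $\mathcal{N}\neq\emptyset$; and $\langle\Phi'(u),1\rangle=\int_{\D}Ke^{u}+\frac12\int_{\Sph}he^{u/2}\geq\pi>0$ on $\mathcal{N}$, so $\mathcal{N}$ is a $C^{1}$ manifold on which the constraint is nondegenerate.

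The heart of the argument is a Moser--Trudinger-type inequality compatible with the symmetry. Writing $u=v+s$ with $s=\frac{1}{2\pi}\int_{\Sph}u$, so $\int_{\Sph}v=0$, I would establish that for some $C=C(K,h,G)$,
\[
\log\int_{\D}e^{v}\leq\frac{1}{16\pi}\int_{\D}\modgrad{v}+C\Big(1+\Big(\int_{\D}\modgrad{v}\Big)^{1/2}\Big),\qquad \log\int_{\Sph}e^{v/2}\leq\frac{1}{32\pi}\int_{\D}\modgrad{v}+C,
\]
for all $v\in\Hs$ with $\int_{\Sph}v=0$. Without symmetry the first inequality only holds with the constant $\frac{1}{8\pi}$, because mass can concentrate at a single boundary point (where the blow-up profile is modeled on a half-plane); the hypothesis that $G$ has no fixed point on $\Sph$ forces boundary concentration to occur along an orbit of at least two points, which halves the cost and restores $\frac{1}{16\pi}$. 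Interior concentration at the centre of $\D$ is not improved by $G$, but already costs $\frac{1}{16\pi}$. These estimates would be derived from the classical interior and trace Moser--Trudinger inequalities by a localization argument combined with the group action, in the spirit of Moser's treatment of the symmetric Nirenberg problem. \textbf{This is the step I expect to be the main obstacle:} it requires a careful concentration--compactness analysis distinguishing interior from boundary blow-up and tracking the exact constants, since the coercivity bound below is borderline and relies precisely on $\frac{1}{16\pi}<\frac{1}{8\pi}$.

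Granting these inequalities, I would show $I$ is coercive and bounded below on $\mathcal{N}$. On $\mathcal{N}$ one has $I(u)=\frac12\int_{\D}\modgrad{v}+4\pi s+2\int_{\D}Ke^{u}-8\pi\geq\frac12\int_{\D}\modgrad{v}+4\pi s-8\pi$, while the constraint reads $e^{s}\int_{\D}Ke^{v}+e^{s/2}\int_{\Sph}he^{v/2}=2\pi$, so one of the two summands is $\geq\pi$. Estimating the corresponding integral by the inequality above yields $s\geq-\frac{1}{16\pi}\int_{\D}\modgrad{v}-C\big(1+(\int_{\D}\modgrad{v})^{1/2}\big)$, hence $I(u)\geq\frac14\int_{\D}\modgrad{v}-C\big(1+(\int_{\D}\modgrad{v})^{1/2}\big)$ on $\mathcal{N}$. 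This is coercive, so a minimizing sequence is bounded in $\H$; passing to a weak limit and using the compactness of the Sobolev and trace embeddings (which makes $\Phi$ and the exponential terms of $I$ weakly continuous, while $\int_{\D}\modgrad{\cdot}$ is weakly lower semicontinuous) produces a minimizer $u\in\mathcal{N}$.

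It remains to check that $u$ solves \eqref{Probl}. By Palais' symmetric criticality principle, $u$ is a constrained critical point of $I$ on all of $\H$, so there is a Lagrange multiplier $\mu$ with $-\Delta u=(2+\mu)Ke^{u}$ in $\D$ and $\frac{\de u}{\de\eta}+2=(2+\frac{\mu}{2})he^{u/2}$ on $\Sph$. Integrating over $\D$ and using the divergence theorem gives $(2+\mu)\int_{\D}Ke^{u}+(2+\frac{\mu}{2})\int_{\Sph}he^{u/2}=4\pi$; subtracting $2\,\Phi(u)=4\pi$ leaves $\mu\big(\int_{\D}Ke^{u}+\frac12\int_{\Sph}he^{u/2}\big)=0$, and since the factor in brackets is $\geq\pi$ we conclude $\mu=0$. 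Thus $u$ is a weak solution of \eqref{Probl}, and since $K$ and $h$ are H\"{o}lder continuous, elliptic regularity for the nonlinear Neumann problem (bootstrapping from $L^{p}$ to $C^{1,\alpha}$ and then applying Schauder estimates) shows that $u$ is a classical solution.
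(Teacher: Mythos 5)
Your proposal is correct in outline, but it takes a genuinely different route from the paper. You minimize the ``free'' energy (no logarithms) on the Gauss--Bonnet constraint set $\mathcal{N}=\{\Phi(u)=2\pi\}$ inside $\Hs$ and then kill the Lagrange multiplier by testing with the constant function, using $\langle\Phi'(u),1\rangle\ge\pi$; the paper instead introduces the two-variable functional $I(u,\rho)$ of \eqref{DefFunctional}, minimizes jointly over $\Hs\times[0,2\pi]$, and must rule out the endpoints $\rho=0,2\pi$ by an energy-comparison argument against the limit functionals $I_0$ and $I_{2\pi}$ (Theorems \ref{TheoremGeodesic} and \ref{TheoremGaussian}). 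Your route buys a uniform treatment: the endpoint analysis disappears, the vanishing of $\mu$ replaces it, and the cases $K\equiv0$ or $h\equiv0$ are absorbed without separate discussion; the paper's formulation, on the other hand, is the one that extends beyond the nonnegative case (Theorems \ref{TheoremGeneral} and \ref{TheoremPerturbed}), whereas your argument leans on $K,h\ge0$ at several points (discarding $2\int_\D Ke^u\ge 0$, the lower bound $\ge\pi$ on the bracket, and the dichotomy in the coercivity estimate). The step you flag as the main obstacle --- the symmetry-improved inequalities with constants $\tfrac{1}{16\pi}$ and $\tfrac{1}{32\pi}$ --- is exactly what the paper supplies in Section 2: Propositions \ref{Local1} and \ref{Local3} together with Corollaries \ref{Local2Corollary} and \ref{Local3Corollary} applied with $l=2$, after the standard covering/dichotomy argument showing that a $G$-symmetric function either $\gamma$-splits its mass between two separated regions (two separated boundary arcs for the trace term) or concentrates in a single region which, by the absence of fixed points of $G$ on $\Sph$, must stay away from the boundary. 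Two small precisions: those localized inequalities come with an $\e$-loss in the constant and are normalized by $\fint_\D u=0$ rather than $\fint_\Sph u=0$, so your clean statements should carry an $\e\int_\D\modgrad{v}$ (or a $C\|\nabla v\|_{L^2}$) correction; this is harmless because your coercivity computation has slack ($\tfrac12-\tfrac{1+\e}{4}>0$), exactly as in the paper's Proposition \ref{Coercivity}. With that ingredient granted, your compactness, symmetric-criticality/multiplier and regularity steps are sound, so the proposal is completable.
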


We can also deal with changing sign curvatures $K$, $h$, as long as their negative part is small:

\begin{theorem} \label{TheoremPerturbed}Let $K_0: \D \to \R$, $h_0: \Sph \to \R$ be H\"{o}lder continuous, nonnegative and $G-$symmetric functions, none of them identically equal to $0$. Then there exists $\e >0$ such that problem \eqref{Probl} admits a solution for any H\"{o}lder continuous and $G-$symmetric functions $K$, $h$ with $\| K- K_0\|_{L^{\infty}} + \| h - h_0\|_{L^{\infty}} < \e$.
\end{theorem}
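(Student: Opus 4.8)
The statement is a perturbation result: Theorem \ref{TheoremNonNegative} gives a solution for the unperturbed nonnegative pair $(K_0, h_0)$, and we want to persist this solution under small $L^\infty$-perturbations, even ones that make the curvatures change sign. The natural plan is to revisit the variational argument behind Theorem \ref{TheoremNonNegative} and show that the minimization scheme used there is stable.

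Let me think about what the functional should be. Problem \eqref{Probl} is the Euler–Lagrange equation of an energy of the form
\[
I_{K,h}(u) = \frac12 \int_{\D} \modgrad{u} + 2\int_{\Sph} u - 2\log\left( \int_{\D} K e^u + \int_{\Sph} h e^{u/2}\right),
\]
possibly after fixing a normalization (e.g. restricting to $u$ with $\int_{\D} K_0 e^u + \int_{\Sph} h_0 e^{u/2}$ equal to a constant, or quotienting by constants), set on the subspace $\Hs$ of $G$-symmetric $H^1$ functions. The role of the $G$-symmetry, via a Moser–Trudinger inequality with improved constant on $\Hs$ (which must be the technical core of the proof of Theorem \ref{TheoremNonNegative}), is to make $I_{K_0,h_0}$ coercive and bounded below on $\Hs$, so that a minimizer exists and solves \eqref{Probl}. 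First I would isolate from that proof the precise coercivity estimate: there exist $\alpha > 0$, $C$ such that $I_{K_0,h_0}(u) \geq \alpha \|u\|^2 - C$ on $\Hs$ (or the analogous statement after normalization).

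The key steps are then: (1) For $K, h$ with $\|K - K_0\|_\infty + \|h - h_0\|_\infty < \e$, nonnegativity is lost, so the term $\int_{\D} K e^u + \int_{\Sph} h e^{u/2}$ need not be positive and the logarithm is not obviously defined. I would handle this by writing $K = K_0 + (K - K_0)$, $h = h_0 + (h-h_0)$ and estimating the perturbative pieces using the Moser–Trudinger inequality: $\big|\int_{\D}(K-K_0)e^u\big| \leq \|K-K_0\|_\infty \int_{\D} e^u \leq \e\, C e^{\beta \|u\|^2 + C}$, and similarly on the boundary via the trace Moser–Trudinger inequality. (2) Show that on the relevant sublevel set — or on a ball $\|u\| \leq R$ chosen from the coercivity of $I_{K_0,h_0}$ — the quantity $\int_{\D} K e^u + \int_{\Sph} h e^{u/2}$ stays bounded below by a positive constant once $\e$ is small, so $I_{K,h}$ is well-defined there; and the perturbation $I_{K,h} - I_{K_0,h_0}$ is small in $C^0$ (and in fact in $C^1$) on that ball uniformly. (3) Conclude that $\inf_{\Hs} I_{K,h}$ is attained: either by noting that $I_{K,h}$ inherits coercivity from $I_{K_0,h_0}$ with slightly worse constants (absorbing the $\e$-small error into the $\alpha \|u\|^2$ term), so that the direct method applies verbatim, or by a deformation/degree argument if coercivity is borderline. (4) Check the minimizer is a weak, hence classical (by elliptic regularity and the Hölder hypotheses), solution of \eqref{Probl}, and that the $G$-symmetric critical point is a genuine critical point in $H^1(\D)$ by the principle of symmetric criticality.

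The main obstacle I expect is step (1)–(2): controlling the sign-changing curvature terms inside the logarithm. One must ensure not merely that the functional is bounded below but that its natural domain — the set where the argument of $\log$ is positive — still contains a minimizing sequence and, more delicately, that minimizing sequences cannot drift toward the boundary of this domain (where $I_{K,h} \to +\infty$, which is actually helpful) nor escape to infinity (controlled by coercivity). If the proof of Theorem \ref{TheoremNonNegative} produces a strict inequality — e.g. $\inf I_{K_0,h_0} < $ some threshold value coming from the Moser–Trudinger constant that guarantees compactness — then the perturbation argument is clean: the threshold is unchanged, $\inf I_{K,h} \leq \inf I_{K_0,h_0} + o_\e(1) < $ threshold, and the same concentration-compactness dichotomy used for $(K_0,h_0)$ rules out blow-up for $(K,h)$. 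I would therefore structure the proof to quote that strict-inequality/compactness statement from the proof of Theorem \ref{TheoremNonNegative} and simply verify it is stable under the $\e$-perturbation.
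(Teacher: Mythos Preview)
Your plan has a genuine gap: the functional you write down is not the one behind Theorem \ref{TheoremNonNegative}, and this is not a cosmetic issue. Problem \eqref{Probl} involves two nonlinear terms with different exponents, $Ke^u$ in the interior and $he^{u/2}$ on the boundary, and a single logarithm $\log\big(\int_\D Ke^u+\int_{\Sph}he^{u/2}\big)$ does not produce \eqref{Probl} as its Euler--Lagrange equation (the variation of the boundary piece picks up an extra factor $\tfrac12$, so the interior and boundary normalizations cannot be matched simultaneously). The paper's resolution is to introduce an auxiliary unknown $\rho\in(0,2\pi)$ and work with the two-variable functional
\[
I(u,\rho)=\tfrac12\int_\D|\nabla u|^2-2\rho\log\int_\D Ke^u+2\int_{\Sph}u-4(2\pi-\rho)\log\int_{\Sph}he^{u/2}+f(\rho),
\]
whose critical points solve \eqref{Probl3}, equivalent to \eqref{Probl}. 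Coercivity in $u$ (Proposition \ref{Coercivity}) holds uniformly in $\rho\in[0,2\pi]$ and is manifestly robust under $L^\infty$-perturbations of $(K,h)$, so steps (1)--(3) of your outline are not where the difficulty lies.

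The actual obstacle, which your proposal does not identify, is that a minimizing sequence for $I$ on $\X_G\times(0,2\pi)$ may have $\rho_n\to 0$ or $\rho_n\to 2\pi$, and when $K$ or $h$ changes sign the weak limit $\hat u$ may fall outside $\X_G$ (one of $\int_\D Ke^{\hat u}$, $\int_{\Sph}he^{\hat u/2}$ could be nonpositive). The paper handles this not by a threshold/concentration-compactness dichotomy, but via Theorem \ref{TheoremGeneral}: it suffices that some minimizer of the limiting functional $I_0$ (the $\rho=0$ problem, $K$ absent) satisfies $\int_\D Ke^u>0$, and some minimizer of $I_{2\pi}$ (the $\rho=2\pi$ problem, $h$ absent) satisfies $\int_{\Sph}he^{u/2}>0$. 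The proof of Theorem \ref{TheoremPerturbed} is then a contradiction argument using a compactness lemma (Lemma \ref{Compactness}): if these conditions failed along a sequence $(K_n,h_n)\to(K_0,h_0)$, the corresponding minimizers of the limiting problems would converge weakly to solutions $\tilde u,\hat u$ for $(K_0,h_0)$ with $\int_\D K_0e^{\hat u}\le 0$ or $\int_{\Sph}h_0e^{\tilde u/2}\le 0$, impossible since $K_0,h_0\ge 0$ and are not identically zero. The ``strict inequality below a Moser--Trudinger threshold'' you anticipate quoting is not present in the paper; the $G$-symmetry already gives full coercivity, and the entire content of the perturbation argument is this $\rho$-endpoint analysis.
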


One of the main goals of this paper is to find an original variational setting to this problem, which we think is natural and could be of use in future research on the topic. Let us be more specific. We define the parameter $\rho := \int_\D K e^u = 2\pi - \int_{\Sph} he^{u/2}$. In order to fix ideas, let us assume that both $K$, $h$ are nonnegative functions; by \eqref{GB},  $0<\rho<2\pi$.

We shall show that \eqref{Probl} is equivalent to: 

\begin{align}
\left\lbrace \begin{array}{ll}
-\Delta u = 2\rho \frac{Ke^u}{\int_\D Ke^u} & \mbox{in } \D, \\
\frac{\partial u}{\partial \eta} + 2 = 2(2\pi-\rho)\frac{he^{u/2}}{\int_\Sph he^{u/2}} & \mbox{on } \Sph, \\ \frac{(2\pi-\rho)^2}{\rho}=\frac{\left(\int_\Sph he^{u/2}\right)^2}{\int_\D Ke^u} & \mbox{for } 0<\rho<2\pi.
\end{array}\right. \label{Probl3}
\end{align}

Observe that problem \eqref{Probl3} is now invariant under addition of constants to $u$, and $\rho$ is here an unknown. This formulation may seem rather artificial but it has the advantage of being related to the critical points of the energy functional:

\begin{align}
I(u,\rho) &= \frac{1}{2} \int_\D \vert \nabla u\vert^2 - 2\rho \log \int_\D Ke^u + 2\int_{\Sph} u - 4(2\pi-\rho)\log\int_{\Sph} he^{u/2} \nonumber\\ &+4(2\pi-\rho)\log(2\pi-\rho)+2\rho +2\rho\log\rho. \label{DefFunctional}
\end{align}

We highlight the fact that the functional above depends on the couple $(u, \rho)$, where $u \in H^1(\D)$ and $\rho \in (0, 2\pi)$. The form of this energy functional seems to be completely new in the related literature. 

If we freeze the variable $\rho$, the form of this functional is adequate for the use of Moser-Trudinger type inequalities (or Onofri-type inequalitites) which are already available also for boundary terms. Indeed, by interpolating these inequalities we will show that $I$ is bounded from below. We will gain coercivity in the $u$ variable by imposing symmetry, as first done by Moser in \cite{moser2}. Finally, we will need to exclude the possibility of obtaining minima at the endpoints $\rho=0$ or $\rho = 2 \pi$. Those limit cases correspond to the problem in which $K=0$ or $h=0$, respectively, so some study of these cases is needed. By energy estimates we can assure that the minimum is attained at $\rho \in (0,2\pi)$, concluding the proof.

If either $K$ or $h$ changes sign  the above approach fails. We shall also give in Theorem \ref{TheoremGeneral} a more general result; as a corollary, and making use of a compactness result for minima of the functional $I$, we will obtain the perturbation result stated in Theorem \ref{TheoremPerturbed} .

The rest of the paper is organized as follows. In Section 2 we set the notation and the variational formulation of the problem. After that, an analysis of the properties of the energy functional is performed by means of Moser-Trudinger type inequalities. Section 3 is devoted to the proof of Theorem \ref{TheoremNonNegative}, for which we first need to address the limiting cases $\rho=0$ and $\rho=2\pi$. A more general version is also given. Finally, the proof of Theorem \ref{TheoremPerturbed} is completed in Section ~\ref{sect4}.

\section{Variational Setting}
\setcounter{equation}{0}
\label{sect2}

\subsection{Notations}

Let us first set some notations. Given a set $A \subset X$ in a metric space, we denote:

$$(A)^r = \{x \in X: \mbox{dist}(x,A)<r\}.$$ 
Regarding the integrals, in this paper we shall consider only the Lebesgue measure and we drop the element of area or length, that is, we shall only write $\int_\D K e^u$ or $\int_{\Sph} h e^{u/2}$. We also use the symbol $\fint f$ to denote the mean value of $f$, that is, $$\fint_{\Sigma} f = \frac{1}{\vert \Sigma \vert}\int_\S f.$$
 
In our estimates we sometimes write $C$ to denote a positive constant, independent of the variables considered, that may change from line to line.

\subsection{Variational formulation}

As commented in the introduction, we will consider the functional $I$ given by \eqref{DefFunctional} and defined on the space
$$\X\times (0,2\pi) = \left\lbrace u\in H^1(\D): \int_\D Ke^u >0, \int_{\Sph} he^{u/2} >0 \right\rbrace\times (0,2\pi).$$
With the purpose of clarifying the notation, for a fixed $\rho \in (0,2\pi)$ we call $I_{\rho}$ to the functional $u\to I(u,\rho)$ defined for every $u\in\X$. We should notice that the functionals $I_\rho$ are invariant under the addition of constants.

\begin{lemma} \label{notempty} $\X$ is nonempty if and only if $K$ and $h$ are positive somewhere. 
\end{lemma}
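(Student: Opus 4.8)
The plan is to prove the two directions of the equivalence separately, the key point being that membership in $\X$ only constrains the signs of the weighted integrals $\int_\D Ke^u$ and $\int_{\Sph}he^{u/2}$, which one can control by inspecting the local behaviour of $K$ and $h$. For the easy direction, suppose $\X \neq \emptyset$; pick $u \in \X$. Then $\int_\D Ke^u > 0$ forces $K$ to be positive on a set of positive measure, so $K(x_0) > 0$ for some $x_0 \in \D$, and similarly $\int_{\Sph}he^{u/2} > 0$ forces $h$ to be positive somewhere on $\Sph$. (Here I am reading "positive somewhere" as "positive at some point of the respective domain", which by continuity of $K$ and $h$ is the same as "positive on a set of positive measure".)

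For the converse, assume $K(x_0) > 0$ for some $x_0 \in \D$ and $h(y_0) > 0$ for some $y_0 \in \Sph$. The idea is to build $u \in \H$ whose mass is concentrated near $x_0$ and near $y_0$, so that the two weighted integrals pick up only the positive values of $K$ and $h$. Concretely, by continuity choose $r > 0$ small so that $K > 0$ on $B_r(x_0) \cap \D$ and $h > 0$ on $B_r(y_0) \cap \Sph$. I would then take a family $u_\lambda$ of the form $u_\lambda = \lambda\, \vfi$, or more flexibly a sum of two localized bumps — one supported in $B_r(x_0)$, one in a neighbourhood of $y_0$ — and let the heights tend to $+\infty$. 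As $\lambda \to +\infty$, the contributions $\int Ke^{u_\lambda}$ over the regions where $K \le 0$ stay bounded (since $u_\lambda$ is bounded there, being $0$ outside the bumps up to a fixed additive constant), while the contribution over $B_r(x_0)$ blows up; hence $\int_\D Ke^{u_\lambda} > 0$ for $\lambda$ large, and likewise for the boundary term. A technical nuance: the two localization regions must be chosen disjoint (shrink $r$ if necessary, which is possible since $x_0 \in \D$ is an interior point while $y_0 \in \Sph$), so the two bumps do not interfere; and one checks $u_\lambda \in \H$, which is immediate for smooth compactly-supported-type bumps.

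The only mild obstacle is bookkeeping: one must ensure that the ``bad'' part of each integral, over $\{K \le 0\}$ respectively $\{h \le 0\}$, is genuinely bounded as the bump height grows. This is handled by choosing $u_\lambda$ to equal a fixed constant (say $0$) outside the two small balls, so $e^{u_\lambda}$ is bounded by $1$ there and the bad integrals are bounded by $\int_\D |K|$ and $\int_{\Sph}|h|$, which are finite by H\"older continuity on compact sets; meanwhile on $B_r(x_0)$ we have $Ke^{u_\lambda} \ge (\min_{B_{r/2}(x_0)} K)\, e^{\lambda} \cdot |B_{r/2}(x_0)| \to +\infty$. Thus both weighted integrals are eventually positive, giving $u_\lambda \in \X$ for large $\lambda$, and the lemma follows.
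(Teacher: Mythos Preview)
Your approach is essentially the same as the paper's --- build a test function out of two localized bumps, one near an interior point where $K>0$ and one near a boundary point where $h>0$ --- but your explicit argument has a genuine gap in the bookkeeping step.

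You claim that the ``bad'' part of $\int_\D K e^{u_\lambda}$, namely the integral over $\{K\le 0\}$, stays bounded because $u_\lambda=0$ outside the two small balls. But the ball around $y_0$ is one of those two balls, and nothing prevents $K$ from being negative there: you only arranged $h>0$ near $y_0$, not $K>0$. On that ball $e^{u_\lambda}$ has size $e^\lambda$, so the negative contribution to $\int_\D K e^{u_\lambda}$ coming from the $y_0$-bump is of the same order $e^\lambda$ as the positive contribution from the $x_0$-bump, and you cannot conclude that the total is positive.

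The fix is precisely what the paper does: decouple the two heights. Take $\varphi$ equal to a constant $b$ on the $y_0$-bump and a constant $a$ on the $x_0$-bump. First choose $b$ large enough that $\int_{\Sph} h e^{\varphi/2}>0$ (here only the $y_0$-bump touches $\Sph$, so there is no interference). With $b$ now fixed, the contribution of the $y_0$-bump to $\int_\D K e^{\varphi}$ is a fixed finite number; then let $a\to+\infty$ so that the positive contribution near $x_0$ dominates. Your single-parameter family $u_\lambda=\lambda\varphi$ ties the two heights together and does not allow this two-step choice. (Equivalently, you could scale the two bumps at different rates, e.g.\ height $\lambda$ near $y_0$ and $2\lambda$ near $x_0$, so that the positive term $\sim e^{2\lambda}$ beats the possible negative term $\sim e^{\lambda}$; but the sequential choice of $b$ then $a$ is cleaner.)

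A minor side remark: you assert that $x_0$ is an interior point of $\D$, but a priori $K$ could attain its positive value only on $\Sph$. This is harmless --- by continuity $K>0$ on an open neighbourhood, which contains interior points --- but it is worth saying explicitly, since you need the $x_0$-ball to be disjoint from $\Sph$ so that the $x_0$-bump does not interfere with the boundary integral.
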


\begin{proof}
We reduce ourselves to prove that if $K$ and $h$ are positive somewhere then $\X$ is nonempty, as the reciprocal is immediate. As $K$ is continuous and there exists $x_0\in\mbox{Int}(\D)$ such that $K(x_0)>0$, then there exists $r>0$ such that $(\{x_0\})^r\cap \Sph = \emptyset$ and $K(x)>0$ for all $x\in (\{x_0\})^r$. 

Moreover, we know that there exists $x_1\in\Sph$ satisfying $h(x_1)>0$, and again by continuity we get $s>0$ such that $h(x)>0$ for all $x\in (\{x_1\})^s\cap\Sph$. It is not restrictive to assume $(\{x_0\})^r\cap (\{x_1\})^s=\emptyset$. We call $\Omega_0^r := (\{x_0\})^r$ and $\Omega_1^s := (\{x_1\})^s$ and consider a cutoff function $\varphi \in H^1(\D)$ satisfying
\begin{align*}
\varphi(x)=\left\lbrace\begin{array}{ll} a & \mbox{if } x\in\Omega_0^{r/2}, \\ b & \mbox{if } x\in\Omega_1^{s/2}, \\ 0 & \mbox{if } \D\backslash \left(\Omega_0^r \cup \Omega_1^s\right),\end{array} \right.
\end{align*}
where $a$ and $b$ are real constants to determine. We see that:
\begin{align*}
\int_\Sph he^{\varphi/2} &= \int_{\Omega_1^{s/2}\cap\partial\D}he^{\varphi/2}+\int_{\left(\Omega_1^s\backslash\Omega_1^{s/2}\right)\cap\partial\D}he^{\varphi/2}+ \int_{\partial\D\backslash \Omega_1^s}he^{\varphi/2} \\
&\geq e^{b/2}\int_{\Omega_1^{s/2}\cap\partial\D}h+ \int_{\partial\D\backslash \Omega_1^{s/2}}h = C_1e^{b/2} + C,
\end{align*}
being $C_1>0$ and $C\in\R$. We can choose $b$ large enough so that $$\int_\Sph he^{\varphi/2}>0.$$
Furthermore,
\begin{align*}
\int_\D Ke^\varphi &= \int_{\Omega_0^{r/2}}Ke^\varphi + \int_{\Omega_1^{s/2}}Ke^\varphi + \int_{\D\backslash\left(\Omega_0^r\cup\Omega_1^r\right)}Ke^\varphi + \int_{\Omega_1^s\backslash \Omega_1^{s/2}}Ke^\varphi \\ &+ \int_{\Omega_0^r\backslash \Omega_0^{r/2}}Ke^\varphi \geq e^a \int_{\Omega_0^{r/2}}K - e^b C_2 \Vert K \Vert_\infty + C = C_1'e^a - C_2'e^b + C.
\end{align*}
So we can also set $a$ big enough so that $$\int_\D Ke^\varphi >0.$$
\end{proof}

Let us point out that the Euler-Lagrange equation of $I$ is given by \eqref{Probl3}, which is a reformulation of \eqref{Probl}, in view of next Lemma:
\begin{lemma} Problems \eqref{Probl} and \eqref{Probl3} are equivalent.
\end{lemma}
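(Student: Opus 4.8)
The plan is to prove the two implications separately, exploiting the fact that \eqref{Probl3} is invariant under the addition of constants to $u$ while \eqref{Probl} is not, so that the third (scalar) equation in \eqref{Probl3} plays the role of a compatibility condition.

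First I would check that a solution of \eqref{Probl} produces a solution of \eqref{Probl3}. Given $u$ solving \eqref{Probl} (hence $u\in\X$), integrate the interior equation over $\D$ and use the divergence theorem together with the boundary condition to obtain the Gauss--Bonnet identity
\[
\int_\D Ke^u + \int_{\Sph} he^{u/2} = 2\pi .
\]
Setting $\rho := \int_\D Ke^u$, which lies in $(0,2\pi)$ since both integrals are positive and add up to $2\pi$, we get $\int_{\Sph} he^{u/2} = 2\pi-\rho$. Multiplying and dividing the right-hand side of the interior equation by $\rho$ and that of the boundary equation by $2\pi-\rho$ recovers the first two lines of \eqref{Probl3}, while the third line holds automatically because $\frac{(2\pi-\rho)^2}{\rho}$ and $\frac{\left(\int_{\Sph} he^{u/2}\right)^2}{\int_\D Ke^u}$ coincide by the definition of $\rho$ and the identity above. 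Thus $(u,\rho)$ solves \eqref{Probl3}.

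For the converse, let $(u,\rho)\in\X\times(0,2\pi)$ solve \eqref{Probl3} and look for a constant $c$ such that $v:=u+c$ solves \eqref{Probl}. Substituting $v$ into the interior equation of \eqref{Probl} forces $e^{c}=\rho/\int_\D Ke^u$, and into the boundary equation it forces $e^{c/2}=(2\pi-\rho)/\int_{\Sph}he^{u/2}$, that is, $e^{c}=(2\pi-\rho)^2/\left(\int_{\Sph}he^{u/2}\right)^2$. These two prescriptions for $e^{c}$ agree exactly because of the third equation of \eqref{Probl3}, and all the quantities involved are positive since $u\in\X$ and $\rho\in(0,2\pi)$; with this choice of $c$, a direct substitution shows that $v=u+c$ satisfies both equations of \eqref{Probl}. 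In particular one gets the clean statement that $u$ solves \eqref{Probl} if and only if $\bigl(u,\int_\D Ke^u\bigr)$ solves \eqref{Probl3}.

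The argument is elementary and I do not anticipate a genuine obstacle; the only points requiring care are the bookkeeping of the positivity of $\int_\D Ke^u$ and $\int_{\Sph}he^{u/2}$ — guaranteed by working in $\X$ — so that the logarithms and the shift $c$ are well defined, and the conceptual remark that the scalar constraint in \eqref{Probl3} is precisely the condition allowing a single additive constant to absorb simultaneously the two normalizations built into the first two equations.
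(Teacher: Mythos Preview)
Your argument is correct and follows the same route as the paper: define $\rho=\int_\D Ke^u$ and use Gauss--Bonnet for the forward implication, then add a constant $c$ determined by $e^c=\rho/\int_\D Ke^u$ (equivalently $e^{c/2}=(2\pi-\rho)/\int_{\Sph}he^{u/2}$, the two agreeing by the third equation) for the converse. Your write-up is in fact more explicit than the paper's---you spell out the derivation of the Gauss--Bonnet identity and the positivity checks---but the method is identical.
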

\begin{proof}
In order to check that every solution of \eqref{Probl} is a solution of \eqref{Probl3} we just need to take $\rho = \int_\D Ke^u = 2\pi-\int_\Sph he^{u/2} >0.$ Reciprocally, if $u\in \X$ solves (\ref{Probl3}), applying the invariance under addition of constant of that problem we have, for any $C\in\R:$
\begin{align*}
-\Delta (u+C) &= 2\rho \frac{Ke^{u+C}}{e^C\int_\D Ke^u}\, , \\
\frac{\partial(u+C)}{\partial \eta}+2 &= 2(2\pi-\rho)\frac{he^{u/2}}{e^{\frac{C}{2}}\int_\Sph he^{u/2}}\, .
\end{align*}
If we want $u+C$ to solve \eqref{Probl}, we need $C\in\R$ such that $$e^C = \frac{\rho}{\int_\D K e^u}\, ,\ \ e^{\frac{C}{2}}=\frac{(2\pi-\rho)}{\int_\Sph he^{u/2}}\,.$$
The third equation of \eqref{Probl3} tells us that both conditions are actually the same. Thus, it is enough to choose $C=\log\rho - \log \int_\D Ke^u$.
\end{proof}

\subsection{Moser-Trudinger inequalities}
The Moser-Trudinger inequalities (see \cite{chang1,moser1,moser2,trudinger1}) and their variations are useful tools to deal with the non-linear terms of exponential type which appear in our functional. In particular we are interested in weaker versions of these inequalities, also called Onofri type inequalities.

\begin{theorem}\label{MosTrudWeak} Let $\S$ be a compact surface with $C^1$ boundary. Then there exists a constant $C\in\R$, depending only on $\S$, such that
\begin{equation}
\log \int_\S e^u \leq \frac{1}{16\pi} \int_\S \vert \nabla u \vert ^2 + C \hsp \hsp \forall u\in H^1_0(\S), \label{equa3}
\end{equation}
and
\begin{equation}
\log \int_\S e^u \leq \frac{1}{8\pi} \int_\S \vert \nabla u\vert^2 + \fint_{\S}u + C \hsp \hsp \forall u\in H^1(\S). \label{equa4}
\end{equation}
\end{theorem}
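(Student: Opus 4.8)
The plan is to prove the two Onofri-type inequalities \eqref{equa3} and \eqref{equa4} for a compact surface $\S$ with $C^1$ boundary by deducing them from the classical (sharp) Moser–Trudinger inequalities in the exponential-integrability form, which are standard references \cite{moser1,trudinger1,chang1}. The starting point is the fact that for such a $\S$ there is a constant $C_\S$ with $\int_\S e^{4\pi u^2/\|\nabla u\|_2^2}\le C_\S$ for all $u\in H^1_0(\S)$ with $\nabla u\ne 0$, and similarly $\int_\S e^{2\pi (u-\fint_\S u)^2/\|\nabla u\|_2^2}\le C_\S$ for all $u\in H^1(\S)$. The constants $4\pi$ and $2\pi$ here are the two-dimensional analogues that, after the elementary estimate below, produce exactly the factors $1/(16\pi)$ and $1/(8\pi)$ appearing in the statement.

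First I would reduce to the case $\|\nabla u\|_{L^2(\S)}^2=1$ by noting that both sides of \eqref{equa3} and \eqref{equa4} behave in a controlled way under scaling $u\mapsto \lambda u$; more efficiently, I would use the pointwise Young-type inequality
\[
u \le \frac{\alpha}{2}\,u^2 + \frac{1}{2\alpha}
\]
valid for every $\alpha>0$. For \eqref{equa3}, apply this with $\alpha = \|\nabla u\|_2^2/(4\pi)$ (assuming $\nabla u\ne 0$; the case $u\equiv 0$ is trivial since then $u\in H^1_0$ forces... actually one just treats it directly), integrate over $\S$, and use Jensen's inequality together with the Moser–Trudinger bound:
\[
\int_\S e^u \le e^{1/(2\alpha)}\int_\S e^{\frac{\alpha}{2}u^2}
= e^{2\pi/\|\nabla u\|_2^2}\int_\S e^{\frac{2\pi u^2}{\|\nabla u\|_2^2}}.
\]
Taking logarithms and bounding $\int_\S e^{2\pi u^2/\|\nabla u\|_2^2}\le\int_\S e^{4\pi u^2/\|\nabla u\|_2^2}+|\S|$ (or arguing on $\{|u|\le \|\nabla u\|_2/\sqrt{4\pi}\}$ and its complement separately) yields $\log\int_\S e^u \le \frac{1}{16\pi}\|\nabla u\|_2^2 + C$ after absorbing the lower-order term $2\pi/\|\nabla u\|_2^2$, which is bounded on the relevant range or can be handled by the standard two-regime splitting. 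For \eqref{equa4}, since both sides are invariant under adding a constant to $u$, I would assume $\fint_\S u=0$ and run the identical argument with the $H^1$ Moser–Trudinger inequality (constant $2\pi$), choosing $\alpha=\|\nabla u\|_2^2/(2\pi)$, which delivers the factor $1/(8\pi)$.

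The main technical obstacle is handling the normalization cleanly: the naive estimate produces a term like $2\pi/\|\nabla u\|_2^2$ which blows up as $\|\nabla u\|_2\to 0$, so one cannot simply absorb it into an additive constant without care. The standard fix, which I would carry out, is the two-regime argument: if $\|\nabla u\|_2^2 \ge 1$ the extra term is bounded by $2\pi$ and we are done; if $\|\nabla u\|_2^2 < 1$, one instead splits $\S = \{u \le t\}\cup\{u>t\}$ with $t$ chosen appropriately (or applies the Moser–Trudinger inequality to a truncation), using that on the sublevel set $e^u$ is bounded while the superlevel set has small measure by Chebyshev combined with the exponential integrability. A second, more bookkeeping-type point is that the $C^1$-boundary hypothesis is exactly what guarantees the validity of the underlying $H^1$ Moser–Trudinger inequality on $\S$ (via an extension operator $H^1(\S)\to H^1(\R^2)$ or a collar/chart argument), and the constant $C$ then depends only on $\S$ through this extension; I would cite \cite{chang1} for the boundary version rather than reprove it.
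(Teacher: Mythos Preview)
The paper does not actually prove this theorem: immediately after the statement it simply records that the first inequality is classical and the second is \cite[Proposition~2.3 and its corollary]{chang1}, and moves on. Your proposal therefore goes well beyond what the paper does, supplying the standard derivation of the Onofri-type inequalities from the sharp exponential Moser--Trudinger bound. That strategy is correct and is indeed the usual one.

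There is, however, an arithmetic slip that creates a phantom difficulty. You set $\alpha=\|\nabla u\|_2^2/(4\pi)$ and then write the exponent $\tfrac{\alpha}{2}u^2$ as $\tfrac{2\pi u^2}{\|\nabla u\|_2^2}$; in fact $\tfrac{\alpha}{2}=\|\nabla u\|_2^2/(8\pi)$, which is the reciprocal up to a constant. The correct choice for \eqref{equa3} is $\alpha=8\pi/\|\nabla u\|_2^2$: then
\[
u\le \frac{\alpha}{2}u^2+\frac{1}{2\alpha}=\frac{4\pi\,u^2}{\|\nabla u\|_2^2}+\frac{\|\nabla u\|_2^2}{16\pi},
\]
so that $\int_\S e^u\le e^{\|\nabla u\|_2^2/(16\pi)}\int_\S e^{4\pi u^2/\|\nabla u\|_2^2}\le C_\S\,e^{\|\nabla u\|_2^2/(16\pi)}$ by the $H^1_0$ Moser--Trudinger inequality, and taking logarithms gives \eqref{equa3} in one step. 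Likewise, for \eqref{equa4} (after normalizing $\fint_\S u=0$) one takes $\alpha=4\pi/\|\nabla u\|_2^2$ and uses the $H^1$ Moser--Trudinger bound with constant $2\pi$. With these choices the term $1/(2\alpha)$ equals exactly the desired right-hand side, and your ``main technical obstacle''---the blow-up of $2\pi/\|\nabla u\|_2^2$ as $\|\nabla u\|_2\to 0$---never arises; the two-regime splitting is unnecessary. The case $\nabla u\equiv 0$ is handled separately and is trivial in both settings.
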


The first inequality is classical, whereas the second is given in \cite[Proposition 2.3 and subsequent corollary]{chang1}. In both cases the constant is optimal.

%It is important to observe that the constant in \eqref{equa4} is doubled with respect to the one in \eqref{equa3}, since we can center a bubble in a point of $\de\S$ so that its Dirichlet energy is divided by approximately two. This phenomena does not appear in \eqref{equa4} because of the boundary condition.

In order to address the non-linear boundary terms of the functional $I$, we will use an analogous version of Theorem \ref{MosTrudWeak} for the boundary of a compact surface that can be found in \cite{li-liu}, for instance.

\begin{proposition}\label{MosTrudBoundWeak} Let $\S$ be a compact surface with $C^1$ boundary. Then there exists a constant $C>0$, depending only on $\S$, such that
	$$\log \int_{\de\S} e^u \leq \frac{1}{4\pi} \int_\S \vert \nabla u\vert^2 + \fint_{\de\S} u + C, \hsp \hsp \forall u\in H^1(\Sigma).$$
\end{proposition}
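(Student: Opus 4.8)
The plan is to reduce the inequality, first, to the case of harmonic functions and then to an explicit computation on the model unit disk, recovering the case of a general surface by localization. For the first reduction, given $u\in H^1(\Sigma)$ let $\hat u$ be the harmonic function on $\Sigma$ with the same boundary trace as $u$, i.e.\ the minimizer of $\int_\Sigma|\nabla v|^2$ among all $v\in H^1(\Sigma)$ with $v=u$ on $\partial\Sigma$ in the trace sense. Then $\hat u=u$ on $\partial\Sigma$, so neither $\log\int_{\partial\Sigma}e^u$ nor $\fint_{\partial\Sigma}u$ is affected by replacing $u$ with $\hat u$, while $\int_\Sigma|\nabla\hat u|^2\leq\int_\Sigma|\nabla u|^2$. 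Hence it suffices to prove the inequality for $u$ harmonic (and, by density, smooth up to the boundary).

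For the model case $\Sigma=\D$ with the flat metric, let $u$ be harmonic and write $u=\operatorname{Re}F$ with $F$ holomorphic on $\D$ and $F(0)\in\R$, so that $F(0)=\fint_{\Sph}u$; set $G:=e^{F/2}$ and $G(z)=\sum_{k\geq 0}g_kz^k$, so that $|g_0|^2=e^{\fint_{\Sph}u}$. On $\Sph$ one has $e^u=|e^F|=|G|^2$, whence, by Parseval's identity,
\[
\frac{1}{2\pi}\int_{\Sph}e^u=\sum_{k\geq 0}|g_k|^2 .
\]
Writing $F(z)-F(0)=\sum_{k\geq 1}f_kz^k$, so that $\log(G/g_0)=\frac12\sum_{k\geq 1}f_kz^k$, the classical Lebedev--Milin inequality yields
\[
\sum_{k\geq 0}\left|\frac{g_k}{g_0}\right|^2\leq\exp\left(\frac14\sum_{k\geq 1}k|f_k|^2\right).
\]
Finally, since $u$ is harmonic, $|\nabla u|^2=|F'|^2$, and a direct computation in polar coordinates gives $\int_\D|\nabla u|^2=\pi\sum_{k\geq 1}k|f_k|^2$. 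Combining the three displays,
\[
\frac{1}{2\pi}\int_{\Sph}e^u\leq e^{\fint_{\Sph}u}\exp\left(\frac{1}{4\pi}\int_\D|\nabla u|^2\right),
\]
which is exactly the asserted inequality on $\D$, with $C=\log 2\pi$.

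For a general compact surface $\Sigma$ with $C^1$ boundary one argues by localization: taking isothermal coordinates around each point of $\partial\Sigma$, a neighbourhood of such a point is conformally a half-disk with the boundary arc corresponding to a diameter, and since in dimension two the Dirichlet energy is conformally invariant, the corresponding local inequality on a half-disk follows just as above (representing the harmonic function as the real part of a holomorphic one and using an even reflection, or equivalently invoking the sharp trace Moser--Trudinger inequality on a half-space, for which see \cite{li-liu}). These local estimates are then patched by means of a partition of unity; the delicate point is that this must be done without deteriorating the \emph{leading} constant $\frac{1}{4\pi}$, which is ensured by choosing the coordinate patches small enough that the conformal factors are uniformly close to $1$, so that the error on each patch is an arbitrarily small multiple of $\int_\Sigma|\nabla u|^2$. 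This is by now a standard argument and produces an additive constant depending only on $\Sigma$, which may of course be taken positive.

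The heart of the matter, and the main obstacle, is the sharp model inequality: obtaining coercivity with the \emph{optimal} constant $\frac{1}{4\pi}$ is precisely the content of the Lebedev--Milin inequality (equivalently, of the sharp trace Moser--Trudinger inequality), which is of the same depth as Moser's original argument and cannot be obtained from the crude estimates that would only give a non-optimal constant; it is also this sharpness that makes the localization step genuinely nontrivial. The remaining points are routine.
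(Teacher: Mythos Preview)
The paper does not actually give a proof of this proposition: it states the result, cites \cite{li-liu} for the general compact surface, and remarks that on the disk the inequality is precisely the Lebedev--Milin inequality (with $C=0$), referring to \cite{osgood}. So your proposal goes well beyond the paper, which treats the statement as a black box from the literature.

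Your disk argument is correct and is exactly the classical derivation the paper alludes to: harmonic replacement (which leaves both boundary terms untouched and can only decrease the Dirichlet energy), the representation $u=\operatorname{Re}F$, Parseval on $\Sph$ applied to $|G|^2$, the identity $\int_{\D}|\nabla u|^2=\pi\sum_{k\geq 1}k|f_k|^2$, and the power--series Lebedev--Milin inequality. This matches the paper's remark about the disk case precisely and yields the sharp constant $\frac{1}{4\pi}$ with $C=\log 2\pi$.

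The one genuine soft spot is the localization to a general $\Sigma$. Two points deserve more care than your sketch gives them. First, ``even reflection'' of a harmonic function across the flat piece of boundary does \emph{not} produce a harmonic function unless the normal derivative vanishes there, so the half-disk model does not reduce to the full-disk Lebedev--Milin computation as directly as you suggest; one really needs the sharp trace inequality on the half-plane (or an equivalent device), which is what \cite{li-liu} provides. Second, patching by a partition of unity introduces terms of the type $\int_\Sigma u^2|\nabla\phi_j|^2$ that are not automatically an $\varepsilon$--small multiple of $\int_\Sigma|\nabla u|^2$; making the conformal factors close to $1$ does not by itself control these. An additional truncation trick (of the Chen--Li type, exactly as the paper itself deploys later in the proof of its Proposition~\ref{Local3}) is needed to absorb them without damaging the leading constant. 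Your reference to \cite{li-liu} is the right place to close this, but you should be aware that this is where the real work for general $\Sigma$ lies, not in the disk computation.
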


In the case of the disk, the above inequality is the so-called Lebedev-Milin inequality (with $C=0$, see for instance \cite[equation (4')]{osgood}).

By interpolating the previous inequalities we will obtain a lower bound for the functional $I$. First, we notice that inequality \eqref{equa4} can be manipulated so that the mean value of $u$ in $\de\S$ replaces the mean in $\S$.
\begin{corollary}\label{WeakModified}Let $\S$ be a compact surface with $C^1$ boundary. There exists a constant $C\in\R$, only depending on $\S$, such that $$\log \int_\S e^v \leq \frac{1}{8\pi} \int_\S \vert \nabla v \vert ^2 +\fint_\deS v + C\hsp \hsp \forall v\in H^1(\Sigma).$$
\end{corollary}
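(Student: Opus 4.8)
The plan is to derive the inequality from \eqref{equa4} by applying that inequality not to $v$ itself but to a suitable shift $v-g$, with $g$ chosen so that the bulk average $\fint_\S(\cdot)$ gets converted into the boundary average $\fint_{\de\S}v$ \emph{without} degrading the constant $\tfrac1{8\pi}$. It is worth emphasising why the obvious route fails: applying \eqref{equa4} to $v$ produces $\fint_\S v$, and estimating the discrepancy by a Poincar\'e--trace inequality only gives $|\fint_\S v-\fint_{\de\S}v|\le C\,\|\nabla v\|_{L^2(\S)}$, whose absorption into the Dirichlet term via Young's inequality costs an $\e\|\nabla v\|^2$ and yields the weaker bound with $\tfrac1{8\pi}+\e$. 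Since the exact value $\tfrac1{8\pi}$ is precisely what will make the functional $I$ bounded from below, this loss must be avoided.

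Concretely, I would fix once and for all a function $g\in H^1(\S)$ depending only on $\S$: the solution, unique up to an additive constant, of the Neumann problem
\[
-\Delta g=\frac{4\pi}{|\S|}\ \text{ in }\S,\qquad \de_n g=-\frac{4\pi}{|\de\S|}\ \text{ on }\de\S .
\]
The compatibility condition is satisfied, since $\int_\S(-\Delta g)=4\pi=-\int_{\de\S}\de_n g$ is exactly the divergence theorem; and because $\de\S\in C^1$ and the data are constant, $g$ is H\"older continuous up to the boundary, hence bounded. The point of this choice is the identity, valid for every $v\in H^1(\S)$ by Green's formula,
\[
-\frac{1}{4\pi}\int_\S\nabla v\cdot\nabla g
=-\frac{1}{4\pi}\Big(\int_{\de\S}v\,\de_n g-\int_\S v\,\Delta g\Big)
=\fint_{\de\S}v-\fint_\S v .
\]
Expanding $\tfrac1{8\pi}\int_\S|\nabla(v-g)|^2+\fint_\S(v-g)$ and inserting this identity, the term $\fint_\S v$ cancels and one is left with the algebraic identity
\[
\frac1{8\pi}\int_\S|\nabla(v-g)|^2+\fint_\S(v-g)
=\frac1{8\pi}\int_\S|\nabla v|^2+\fint_{\de\S}v+\Big(\frac1{8\pi}\int_\S|\nabla g|^2-\fint_\S g\Big),
\]
which is nothing but the completed square $\tfrac1{8\pi}\|\nabla v\|^2+(\fint_{\de\S}v-\fint_\S v)=\tfrac1{8\pi}\|\nabla(v-g)\|^2-\tfrac1{8\pi}\|\nabla g\|^2$.

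To conclude, apply \eqref{equa4} to $v-g\in H^1(\S)$, so that the left-hand side of the last display is bounded below by $\log\int_\S e^{v-g}-C$; since $g$ is bounded, $\log\int_\S e^{v-g}\ge\log\int_\S e^{v}-\|g\|_{L^\infty(\S)}$. Rearranging gives the asserted inequality with $C'=C+\|g\|_{L^\infty(\S)}+\tfrac1{8\pi}\int_\S|\nabla g|^2-\fint_\S g$, a constant depending only on $\S$. The only genuinely non-routine step is the choice of the auxiliary function $g$ and the completed-square manipulation that absorbs $\fint_{\de\S}v-\fint_\S v$ \emph{exactly} into the Dirichlet energy, at the mere cost of the additive constant $\|g\|_{L^\infty(\S)}$ incurred when replacing $e^v$ by $e^{v-g}$; the existence and boundedness of $g$ on a surface with $C^1$ boundary, and the validity of Green's formula for $H^1$ functions, are standard.
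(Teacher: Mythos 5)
Your proof is correct and follows essentially the same route as the paper: you introduce an auxiliary function solving a Neumann problem with constant data (your $g$ is just $-w$ for the paper's $w$ in \eqref{equa5}), apply \eqref{equa4} to the shifted function, and use Green's formula to trade $\fint_\S v$ for $\fint_{\de\S} v$ without touching the constant $\tfrac{1}{8\pi}$. Your explicit handling of the boundedness of the auxiliary function, needed to compare $\log\int_\S e^{v-g}$ with $\log\int_\S e^{v}$, is a point the paper's proof leaves implicit.
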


\begin{proof} We consider the problem 
	\begin{equation}
	\left\lbrace \begin{array}{ll} -\Delta w = \frac{-4\pi}{\vert \Sigma \vert} & \mbox{in } \Sigma, \\[0.15cm]
	\frac{\partial w}{\partial \eta} = \frac{4\pi}{\vert \deS\vert} & \mbox{on } \partial \Sigma. \end{array}\right. \label{equa5}
	\end{equation}
	Let us point out that \eqref{equa5} is solvable in $H^1(\Sigma)$ because $\int_\deS \frac{4\pi}{\vert \deS\vert} = -\int_\S \frac{4\pi}{\vert \S\vert} = 4\pi$. We fix a solution $w$ of \eqref{equa5} and apply \eqref{equa4} to $v+w$, obtaining:
	$$\log \int_\S e^v \leq \frac{1}{8\pi}\int_\S \vert \nabla v \vert ^2 + \frac{1}{4\pi} \int_\deS \frac{\partial w}{\partial \eta} v - \frac{1}{4\pi}\int_\S (\Delta w)v + \fint_{\Sigma}v + C.$$
	Finally, we use that $w$ solves \eqref{equa5}:
	\begin{align*}
	\log \int_\S e^v &\leq \frac{1}{8\pi} \int_\S \vert \nabla v \vert ^2 + \int_\deS v-\fint_{\Sigma} v + \fint_{\Sigma} v + C \\ =& \frac{1}{8\pi} \int_\S \vert \nabla v \vert ^2 + \int_\deS v+ C.
	\end{align*}
\end{proof}
In a similar way one can obtain a modified version of Proposition \ref{MosTrudBoundWeak} in which the mean value of $u$ on $\S$ substitutes the mean on $\partial\S$. 
\begin{corollary}\label{WeakModified2}Let $\Sigma$ be a compact surface with $C^1$ boundary. There exists $C\in\R$, only depending on $\S$, such that
\begin{align*}
\log \int_\deS e^u \leq \frac{1}{4\pi} \int_\S \modgrad{u} + \fint_{\Sigma} u + C \hsp \forall u\in H^1(\Sigma) .
\end{align*} 
\end{corollary}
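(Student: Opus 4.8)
The plan is to mirror the proof of Corollary \ref{WeakModified}, simply interchanging the roles of $\S$ and $\deS$. The starting inequality is now Proposition \ref{MosTrudBoundWeak} rather than \eqref{equa4}: that is,
$$\log \int_{\deS} e^u \leq \frac{1}{4\pi} \int_\S \modgrad{u} + \fint_{\deS} u + C \hsp \forall u \in H^1(\S),$$
and the aim is to replace $\fint_{\deS} u$ by $\fint_{\S} u$ at the cost only of the additive constant. As before, the mechanism is to absorb the discrepancy between the two averages into a fixed auxiliary function whose Laplacian and normal derivative contribute a total variation term that is itself bounded.

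First I would fix a solution $w \in H^1(\S)$ of the Neumann problem
\begin{equation*}
\left\lbrace \begin{array}{ll} -\Delta w = \dfrac{4\pi}{\vert \S \vert} & \mbox{in } \S, \\[0.2cm] \dfrac{\partial w}{\partial \eta} = \dfrac{-4\pi}{\vert \deS \vert} & \mbox{on } \partial\S, \end{array}\right.
\end{equation*}
noting that the compatibility condition holds since $\int_\S \frac{4\pi}{\vert\S\vert} = 4\pi = -\int_{\deS}\big(\frac{-4\pi}{\vert\deS\vert}\big)$, so $w$ exists (and is unique up to an additive constant, which is irrelevant). Then I would apply Proposition \ref{MosTrudBoundWeak} to the function $v := u + w$ and expand, using $\n v = \n u + \n w$ and integrating the cross term $\int_\S \n u \cdot \n w$ by parts against $w$'s data:
$$\log \int_{\deS} e^{u} \leq \frac{1}{4\pi}\int_\S \modgrad{u} + \frac{1}{4\pi}\int_\S \modgrad{w} + \frac{1}{2\pi}\int_\S \n u\cdot \n w + \fint_{\deS}u + \fint_{\deS} w + C.$$
The term $\frac{1}{4\pi}\int_\S \modgrad{w} + \fint_{\deS} w$ is a fixed real number depending only on $\S$, so it gets absorbed into $C$. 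For the cross term, integration by parts gives $\int_\S \n u\cdot\n w = \int_{\deS} u\,\frac{\partial w}{\partial\eta} - \int_\S u\,\Delta w = -\frac{4\pi}{\vert\deS\vert}\int_{\deS} u + \frac{4\pi}{\vert\S\vert}\int_\S u = -4\pi\fint_{\deS} u + 4\pi \fint_\S u$, so $\frac{1}{2\pi}\int_\S \n u\cdot\n w = -2\fint_{\deS}u + 2\fint_\S u$. Substituting, the terms $\fint_{\deS}u$ combine as $-2\fint_{\deS} u + \fint_{\deS} u = -\fint_{\deS} u$, which does not cancel — so I should instead normalize the data of $w$ differently, choosing $-\Delta w = \frac{-4\pi}{\vert\S\vert}$ in $\S$ and $\frac{\partial w}{\partial\eta} = \frac{4\pi}{\vert\deS\vert}$ on $\deS$ (still compatible), exactly as in the proof of Corollary \ref{WeakModified}; then the cross term produces $+2\fint_{\deS}u - 2\fint_\S u$, which together with $\fint_{\deS} u$ would overshoot. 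The correct bookkeeping, as in Corollary \ref{WeakModified}, is to pick the sign of $w$'s data so that the cross term contributes exactly $-\fint_{\deS} u + \fint_\S u$ after dividing by the relevant constant; concretely, taking $\frac{\partial w}{\partial \eta} = \frac{-\,4\pi}{|\deS|}$ and $-\Delta w = \frac{4\pi}{|\S|}$, the coefficient $\frac{1}{4\pi}$ in front of the boundary term (not $\frac{1}{2\pi}$) is what applies here, cf.\ the structure of Proposition \ref{MosTrudBoundWeak}, and one gets $\frac{1}{4\pi}\int_\S \n u\cdot\n w = -\fint_{\deS}u + \fint_\S u$, which cancels $\fint_{\deS}u$ and leaves $\fint_\S u$, as desired.

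The one genuine subtlety — and the step to get right — is precisely this sign-and-coefficient bookkeeping: Proposition \ref{MosTrudBoundWeak} carries a $\frac{1}{4\pi}$ on the Dirichlet energy whereas \eqref{equa4} carried a $\frac{1}{8\pi}$, so the factor multiplying the cross term in the expansion differs from the one in Corollary \ref{WeakModified}, and the Neumann data of $w$ must be scaled accordingly so that, after integration by parts, the spurious $\fint_{\deS} u$ is killed and replaced by $\fint_\S u$ with coefficient exactly $1$. Once the data of $w$ are chosen with the right magnitude and sign, everything else is routine: the auxiliary terms involving only $w$ are constants depending on $\S$ alone, $w$ exists because the Neumann compatibility condition is satisfied by construction, and the inequality follows for all $u \in H^1(\S)$.
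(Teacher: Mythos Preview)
Your approach is exactly the one the paper has in mind (the paper omits the proof, saying only ``in a similar way''), and your final paragraph correctly diagnoses the one subtlety: because Proposition~\ref{MosTrudBoundWeak} carries $\tfrac{1}{4\pi}$ on the Dirichlet energy rather than $\tfrac{1}{8\pi}$, the data of the auxiliary $w$ must be rescaled.

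However, the concrete fix you write down is wrong. Expanding $\tfrac{1}{4\pi}\int_\S|\n(u+w)|^2$ gives the cross term $\tfrac{1}{2\pi}\int_\S \n u\cdot\n w$ --- your first computation was correct, and the later assertion that ``the coefficient $\tfrac{1}{4\pi}$ \dots\ is what applies here'' is not. The remedy is not to change the coefficient but to \emph{halve} the data of $w$: take
\[
-\Delta w=\frac{2\pi}{|\S|}\ \text{in }\S,\qquad \frac{\partial w}{\partial\eta}=-\frac{2\pi}{|\deS|}\ \text{on }\deS,
\]
which is compatible since both sides integrate to $2\pi$. Then
\[
\frac{1}{2\pi}\int_\S \n u\cdot\n w
=\frac{1}{2\pi}\Big(\int_{\deS} u\,\tfrac{\partial w}{\partial\eta}-\int_\S u\,\Delta w\Big)
=-\fint_{\deS}u+\fint_\S u,
\]
which cancels the $\fint_{\deS}u$ from Proposition~\ref{MosTrudBoundWeak} and leaves exactly $\fint_\S u$. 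With this correction the argument goes through verbatim.
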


The combined use of the inequality \eqref{equa5} and Corollary \ref{WeakModified} allows us to prove that $I$ is bounded from below in $H^1(\D)$. 

\begin{proposition}\label{LowerBound}There exists a constant $C\in\R$ such that $I_\rho(u)\geq C$ for every $u\in \X$ and every $\rho\in\:[0,2\pi]$.
\end{proposition}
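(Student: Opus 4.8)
The plan is to bound below each of the two exponential terms in $I_\rho$ using the weak Moser--Trudinger inequalities, with the crucial point being that the constants $\frac{1}{8\pi}$ (for the interior term) and $\frac{1}{4\pi}$ (for the boundary term) must combine against the available $\frac12\int_\D|\nabla u|^2$ for \emph{every} value of $\rho\in[0,2\pi]$. First I would estimate $-2\rho\log\int_\D Ke^u$: since $K$ is bounded, $\int_\D Ke^u \le \|K\|_\infty \int_\D e^u$, and applying Corollary \ref{WeakModified} (the version of \eqref{equa4} with the boundary mean) gives $\log\int_\D Ke^u \le \frac{1}{8\pi}\int_\D|\nabla u|^2 + \fint_{\Sph} u + C$. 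Hence
\[
-2\rho\log\int_\D Ke^u \ge -\frac{\rho}{4\pi}\int_\D|\nabla u|^2 - 2\rho\fint_{\Sph} u - C\rho.
\]
Similarly, bounding $\int_\Sph he^{u/2}\le \|h\|_\infty\int_\Sph e^{u/2}$ and applying Proposition \ref{MosTrudBoundWeak} to $u/2$ yields $\log\int_\Sph he^{u/2} \le \frac{1}{16\pi}\int_\D|\nabla u|^2 + \frac12\fint_{\Sph} u + C$, so that
\[
-4(2\pi-\rho)\log\int_\Sph he^{u/2} \ge -\frac{2\pi-\rho}{4\pi}\int_\D|\nabla u|^2 - 2(2\pi-\rho)\fint_{\Sph} u - C(2\pi-\rho).
\]

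Now I would add these two inequalities together with the remaining terms of $I_\rho$. The gradient contributions total $-\frac{\rho}{4\pi}\int_\D|\nabla u|^2 - \frac{2\pi-\rho}{4\pi}\int_\D|\nabla u|^2 = -\frac12\int_\D|\nabla u|^2$, which is \emph{exactly} cancelled by the leading term $\frac12\int_\D|\nabla u|^2$ of $I_\rho$, uniformly in $\rho$; this exact matching is the reason the specific structure of $I$ is chosen. Likewise, the boundary-mean contributions $-2\rho\fint_{\Sph}u - 2(2\pi-\rho)\fint_{\Sph}u = -4\pi\fint_{\Sph}u = -2\int_{\Sph}u$ are cancelled by the term $2\int_\Sph u$ in $I_\rho$. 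What remains is a function of $\rho$ alone:
\[
I_\rho(u) \ge -C\rho - C(2\pi-\rho) + 4(2\pi-\rho)\log(2\pi-\rho) + 2\rho + 2\rho\log\rho,
\]
plus an additive constant.

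Finally I would check that this function of $\rho$ is bounded below on the compact interval $[0,2\pi]$. It is continuous on $(0,2\pi)$, and since $t\log t \to 0$ as $t\to 0^+$, it extends continuously to the closed interval $[0,2\pi]$; a continuous function on a compact set attains its minimum, giving the desired uniform constant $C$. The main (only mildly delicate) point in the argument is the bookkeeping that ensures the gradient terms cancel \emph{exactly} — i.e.\ that one uses Corollary \ref{WeakModified} rather than \eqref{equa4} directly, so that both exponential terms are controlled by the \emph{same} boundary mean $\fint_{\Sph}u$, which then cancels against $2\int_\Sph u$; using the interior mean instead would leave an uncontrolled $\fint_\D u$ term. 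No coercivity or symmetry is needed here — only boundedness from below — so this is a soft argument once the constants are lined up.
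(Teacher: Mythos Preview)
Your proposal is correct and follows essentially the same approach as the paper: bound $\log\int_\D Ke^u$ via Corollary~\ref{WeakModified} and $\log\int_\Sph he^{u/2}$ via Proposition~\ref{MosTrudBoundWeak}, observe that the gradient coefficients $\frac{\rho}{4\pi}+\frac{2\pi-\rho}{4\pi}=\frac12$ and the boundary-mean terms cancel exactly against $\frac12\int_\D|\nabla u|^2$ and $2\int_\Sph u$, and then use compactness of $[0,2\pi]$ for the residual function of $\rho$. The only cosmetic difference is that the paper normalizes $\fint_\Sph u=0$ and introduces auxiliary parameters $a=2\rho$, $b=4(2\pi-\rho)$ before choosing them, whereas you compute the cancellation directly; your remark on why Corollary~\ref{WeakModified} (boundary mean) rather than \eqref{equa4} is the right choice is exactly the point.
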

\begin{proof}
Let us define $f:(0,2\pi)\to\R$ as the correction term in \eqref{DefFunctional}, that is
\begin{align*}
f(\rho)=4(2\pi-\rho)\log(&2\pi-\rho)+2\rho +2\rho\log\rho.
\end{align*}
It is clear that $$\lim_{\rho\to 0} f(\rho) = 8\pi \log(2\pi),\hsp \lim_{\rho\to 2\pi}f(\rho)= 4\pi + 4\pi\log(2\pi).$$
Then, $f$ can be continuously extendended to the compact $[0,2\pi]$. Thus, there exists a constant $M>0$ such that $\vert f(\rho)\vert \leq M$ for all $\rho\in$ $[0,2\pi]$. Moreover, since $K$ and $h$ are continuous, there exist $M_1, M_2 \in \R$ such that $$\log\int_\D K e^u \leq \log \int_\D e^u + C, \hsp \log\int_\Sph h e^{u/2}\leq \int_\Sph e^{u/2}+C.$$
Then, for every $a,b\in \R$:
\begin{align*}
I_\rho(u)&\geq \frac{1}{2}\int_\D\vert\nabla u\vert^2 -2\rho \log \int_\D e^u -4(2\pi-\rho)\log \int_\Sph e^{u/2} + 2\int_\Sph u + C \\
&=\frac{8\pi - 2a -b}{16\pi}\int_\D \vert\nabla u\vert^2 + \frac{a}{8\pi}\int_\D \vert\nabla u\vert^2 +\frac{b}{16\pi}\int_\D\vert \nabla u \vert ^2  \\&- 2\rho \log \int_\D e^u - 4(2\pi-\rho)\log \int_\Sph e^{u/2} + 2\int_\Sph u + C.
\end{align*}
As the functional $I$ is invariant under the addition of constants, we can assume that $\int_\S u = 0$ and apply Corollary \ref{WeakModified} and Proposition \ref{MosTrudBoundWeak} taking $a=2\rho$ and $b=4(2\pi-\rho)$, obtaining: 
\begin{align*}
I_\rho(u)\geq -2\rho \fint_\Sph u - 2(2\pi-\rho)\fint_\Sph u + 2 \int_\Sph u + C = C.
\end{align*}
We highlight that the constant $C$ does not depend on $\rho$.
\end{proof}

Proposition \ref{LowerBound} states that the functional $I$ is bounded from above, but we do not have coercivity. The reason for that is the non-compact action of the conformal group of the disk. This effect appears also in the Nirenberg problem in the sphere, for instance, and makes the problem rather difficult. 

We will show now that we can gain coercivity by restricting ourselves to spaces of symmetric functions. In order to do that, we introduce local versions of the inequalities above. These results are known as Chen-Li type inequalities (see \cite{chen-li} for more details).

\begin{proposition}\label{Local1} Let $\Sigma$ be a compact surface with $C^1$ boundary, $\Sigma_1\subset\Sigma$ and $\delta>0$ such that $(\Sigma_1)^{\delta}\cap\partial\Sigma = \emptyset.$ Then, for every $\e>0$, there exists a constant $C\in\R$ depending on $\e$ and $\delta$ such that $$16\pi \log \int_{\Sigma_1}e^u \leq \int_{(\Sigma_1)^\delta}\vert \nabla u \vert^2 + \e \int_\S \vert \nabla u\vert^2 + C, \hsp \forall u\in H^1(\Sigma) \mbox{ with } \int_\S u = 0.$$
\end{proposition}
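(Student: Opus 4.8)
The plan is to localize the global Onofri-type inequality \eqref{equa4} by means of a cutoff function, absorbing the error terms into an arbitrarily small multiple of the full Dirichlet energy. First I would fix a smooth cutoff $\psi \in C^\infty(\Sigma)$ with $0 \le \psi \le 1$, $\psi \equiv 1$ on a neighborhood of $\Sigma_1$, and $\supp \psi \subset (\Sigma_1)^\delta$; since $(\Sigma_1)^\delta \cap \partial \Sigma = \emptyset$, this support is a compact subset of the interior, so $\psi u \in H^1_0(\Sigma')$ for a slightly larger interior domain $\Sigma' \Subset \Sigma$, and in particular $\psi u$ extends by zero to all of $\Sigma$. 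The key point of passing to $H^1_0$ is that we may invoke the \emph{sharp} constant $\frac{1}{16\pi}$ from \eqref{equa3} rather than the constant $\frac{1}{8\pi}$ of \eqref{equa4}, which is exactly why $16\pi$ appears on the left-hand side of the claimed inequality.

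The main steps are then as follows. Apply \eqref{equa3} to the function $\psi u$: since $\psi \equiv 1$ on $\Sigma_1$, we have $\int_{\Sigma_1} e^u \le \int_{\Sigma} e^{\psi u}$, and hence
\[
16\pi \log \int_{\Sigma_1} e^u \le 16 \pi \log \int_\Sigma e^{\psi u} \le \int_\Sigma |\nabla(\psi u)|^2 + C.
\]
Next I would expand $\nabla(\psi u) = \psi \nabla u + u \nabla \psi$ and estimate
\[
\int_\Sigma |\nabla(\psi u)|^2 \le \int_{(\Sigma_1)^\delta} |\nabla u|^2 + 2\int_\Sigma \psi u \, \nabla\psi \cdot \nabla u + \int_\Sigma u^2 |\nabla\psi|^2,
\]
using $0 \le \psi \le 1$ and $\supp\psi \subset (\Sigma_1)^\delta$ on the first term. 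The cross term is bounded by Young's inequality by $\e' \int_\Sigma |\nabla u|^2 + C(\e',\delta) \int_\Sigma u^2$, and the last term is bounded by $\|\nabla\psi\|_\infty^2 \int_\Sigma u^2$. So everything reduces to controlling $\int_\Sigma u^2$ for $u$ with zero average.

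The remaining step — which I expect to be the only mildly delicate one — is to absorb $\int_\Sigma u^2$ into $\e \int_\Sigma |\nabla u|^2 + C$. This is \emph{not} true pointwise in $u$ with a fixed $C$ (no Poincaré-type inequality gives $\|u\|_2^2 \le \e \|\nabla u\|_2^2 + C$ uniformly), but it \emph{is} true once we also exploit the exponential integrability furnished by the global Moser–Trudinger inequality: from \eqref{equa4} with $\int_\Sigma u = 0$ one gets $\int_\Sigma e^{|u|} \le \int_\Sigma (e^u + e^{-u}) \le C \exp\!\big(\tfrac{1}{8\pi}\int_\Sigma |\nabla u|^2\big)$, and combining $u^2 \le C_\e e^{|u|} + $ (a term linear in $|u|$, controlled by Poincaré) with this bound yields $\int_\Sigma u^2 \le C_\e \exp(\tfrac{1}{8\pi}\int |\nabla u|^2) + C\int_\Sigma |\nabla u|^2$. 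Feeding this back and using that $\log$ of the resulting expression is at most $\tfrac{1}{16\pi}\int|\nabla u|^2 + \e\int|\nabla u|^2 + C$ closes the argument; alternatively, and more cleanly, one argues by contradiction: if the inequality failed there would be a sequence $u_k$ with $\int_\Sigma u_k = 0$, bounded Dirichlet energy (after normalization), along which $\log\int_{\Sigma_1}e^{u_k} \to \infty$, contradicting the global bound via weak $H^1$ compactness and the continuity of $u \mapsto \log\int_{\Sigma_1}e^u$ on bounded sets (Moser–Trudinger gives uniform higher integrability of $e^{u_k}$). Either route gives the constant $C$ depending only on $\e$ and $\delta$, as claimed.
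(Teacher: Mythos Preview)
Your setup is correct and matches the paper's outline: multiply by a cutoff supported in $(\Sigma_1)^\delta$, use that the product lands in $H^1_0(\Sigma)$ so that the sharp constant from \eqref{equa3} applies, and expand $|\nabla(\psi u)|^2$. The paper itself only sketches this much, referring to \cite{Ruiz} and \cite{chen-li} for details; the full argument appears in the paper only for the boundary analogue, Proposition~\ref{Local3}.

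The gap is in your treatment of the residual term $\int_\Sigma u^2$. Neither route you propose works. For the first: once you have $\int_\Sigma u^2 \le C_\e \exp\bigl(\tfrac{1}{8\pi}\int_\Sigma|\nabla u|^2\bigr) + C\int_\Sigma|\nabla u|^2$ and feed it back \emph{additively} into the right-hand side, there is no logarithm left to take --- the inequality you actually need is $C_{\e',\delta}\int_\Sigma u^2 \le \e\int_\Sigma|\nabla u|^2 + C$, and this is simply false (test on $u=\lambda\phi$ with fixed zero-mean $\phi$ and $\lambda\to\infty$). For the contradiction argument: failure of the claimed inequality, combined with your intermediate estimate and Poincar\'e--Wirtinger, forces $\int_\Sigma|\nabla u_n|^2\to\infty$, not boundedness, so no weak compactness is available; and since the inequality is not homogeneous, there is no normalization that restores it.

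The correct device, due to Chen--Li and written out in full in the paper's proof of Proposition~\ref{Local3}, is a truncation from above: apply the intermediate inequality not to $u$ but to $(u-a)^+$, using $\int_{\Sigma_1} e^u \le e^a\int_{\Sigma_1} e^{(u-a)^+}$. Choosing $a$ so that $\eta:=|\{u\ge a\}|$ is small, H\"older plus Sobolev give $\int_\Sigma\bigl((u-a)^+\bigr)^2 \le C\eta^{1/2}\int_\Sigma|\nabla u|^2$, which \emph{can} now be absorbed into $\e\int_\Sigma|\nabla u|^2$; the extra additive $a$ is then controlled via $a\eta\le\int_\Sigma|u|\le C\bigl(\int_\Sigma|\nabla u|^2\bigr)^{1/2}$ and Young's inequality. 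This truncation step is the missing idea.
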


 The details of the proof of this precise statement can be found in \cite[Proposition 2.2]{Ruiz}, for instance, but the idea dates back to \cite{chen-li}. Roughly speaking, one applies \eqref{equa3} to the function $u$ multiplied by a cut-off function in $\Sigma_1$.

If the function $u$ has mass in several separated regions satisfying the hypothesis of the propositions above, the obtained bounds improve by a factor of the number of such regions. This information is collected in the following corollary (see for instance \cite[Lemma 2.4]{Ruiz} for the case $l=2$; the case of general $l$ is analogous).

\begin{corollary} \label{Local2Corollary} Let $\Sigma$ be a compact surface with $C^1$ boundary, $l\in\N$ and $\Sigma_1, \ldots, \Sigma_l \subset \Sigma$ for which there exists a $\delta>0$ such that
	$(\Sigma_i)^\delta\cap(\Sigma_j)^\delta = \emptyset$ if $i\neq j$. Assume that there exists $\gamma \in (0,\frac{1}{l})$ such that $$\frac{\int_{\Sigma_i}e^u}{\int_\S e^u}\geq \gamma,\hsp\forall i=1,\ldots,l.$$Then, for every $\e>0$ there exists a constant $C\in\R$ depending on $\e$, $\delta$ and $\gamma$ such that 
	\begin{align*}
	8l\pi \log \int_\S e^u \leq \int_\S \vert \nabla u \vert^2 + \e \int_\S \vert \nabla u\vert^2 + C, \hsp \forall u\in H^1(\Sigma) \mbox{ with } \int_\S u = 0.
	\end{align*}
\end{corollary}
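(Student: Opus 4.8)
The plan is to deduce Corollary \ref{Local2Corollary} from the single-region estimate in Proposition \ref{Local1} by a partition-of-unity argument, exactly in the spirit of the $l=2$ case in \cite{Ruiz}. First I would fix the $l$ pairwise $\delta$-separated sets $\Sigma_1,\dots,\Sigma_l$, and — shrinking $\delta$ if necessary — I may assume that each thickened set $(\Sigma_i)^{\delta}$ is still at positive distance from $\partial\Sigma$ is \emph{not} what we want here; rather we only need the $(\Sigma_i)^\delta$ mutually disjoint, which is given. The key point is that, because the regions are genuinely disjoint after thickening, one has
\begin{equation*}
\sum_{i=1}^{l} \int_{(\Sigma_i)^{\delta/2}} \modgrad{u} \leq \int_{\Sigma} \modgrad{u},
\end{equation*}
so the local Dirichlet energies add up without overcounting. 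This disjointness is what converts $l$ separate applications of the sharp constant $16\pi$ into a combined constant $8l\pi$; it is the heart of the Chen--Li idea and the only structurally delicate input.

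Next I would like to invoke Proposition \ref{Local1} on each $\Sigma_i$, but that proposition as stated requires $(\Sigma_i)^{\delta}\cap\partial\Sigma=\emptyset$, which is not assumed in the corollary. The fix is to use a boundary variant: if $(\Sigma_i)^\delta$ meets $\partial\Sigma$ one instead applies the Lebedev--Milin / boundary Moser--Trudinger estimate (Proposition \ref{MosTrudBoundWeak}, localized as in \cite{chen-li,Ruiz}), whose sharp constant $4\pi$ for the boundary integral is in fact even more favorable, so no region costs more than a $\frac1{16\pi}$-share of the local gradient energy. Since in the applications of interest $\Sigma=\D$ and the $\Sigma_i$ are small balls around points of $\Sph$, both cases occur and both are covered; for the clean statement I would simply note that the localized estimate
\begin{equation*}
16\pi \log \int_{\Sigma_i} e^u \leq \int_{(\Sigma_i)^{\delta}} \modgrad{u} + \e' \int_\Sigma \modgrad{u} + C_i
\end{equation*}
holds for each $i$, for every $\e'>0$, with $C_i=C_i(\e',\delta)$, whether or not $\Sigma_i$ touches the boundary.

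With these local estimates in hand the assembly is routine. Using the hypothesis $\int_{\Sigma_i} e^u / \int_\Sigma e^u \geq \gamma$ we get $\log\int_\Sigma e^u \leq \log\int_{\Sigma_i} e^u - \log\gamma$ for each $i$. Summing the $l$ local inequalities and dividing by nothing (keeping all $l$ copies) gives
\begin{equation*}
16\pi \sum_{i=1}^{l}\Big(\log \int_\Sigma e^u + \log\gamma\Big) \leq \sum_{i=1}^{l}\int_{(\Sigma_i)^{\delta}} \modgrad{u} + l\,\e'\int_\Sigma \modgrad{u} + \sum_{i=1}^l C_i .
\end{equation*}
Here I would use the thickening $\delta$ slightly more carefully: apply Proposition \ref{Local1} with the pair $(\Sigma_i,\delta/2)$ so that the energy term is $\int_{(\Sigma_i)^{\delta/2}}\modgrad{u}$, and then invoke the disjointness of the $(\Sigma_i)^{\delta/2}$ to bound $\sum_i \int_{(\Sigma_i)^{\delta/2}}\modgrad{u} \leq \int_\Sigma\modgrad{u}$. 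Dividing by $16\pi l$ and relabeling $\e'\mapsto \e/l$ (legitimate since $\e'$ was arbitrary), the $\log\gamma$ and $\sum C_i$ terms are absorbed into a single constant $C=C(\e,\delta,\gamma)$, yielding
\begin{equation*}
8l\pi \log \int_\Sigma e^u \leq \int_\Sigma \modgrad{u} + \e \int_\Sigma \modgrad{u} + C ,
\end{equation*}
which is the claim. The only genuine obstacle is the bookkeeping around the thickening radius — making sure the sets used in the $l$ applications of Proposition \ref{Local1} are still pairwise disjoint after thickening, so that the local energies may be added without a multiplicative loss — and the minor point of covering boundary-touching regions by the boundary inequality; everything else is linear combination and constant-chasing.
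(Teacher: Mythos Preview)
Your overall strategy --- apply the localized single-region inequality to each $\Sigma_i$, convert $\log\int_{\Sigma_i}e^u$ into $\log\int_\Sigma e^u$ via the mass hypothesis $\int_{\Sigma_i}e^u\ge\gamma\int_\Sigma e^u$, and sum using the disjointness of the thickened regions so that $\sum_i\int_{(\Sigma_i)^\delta}\modgrad{u}\le\int_\Sigma\modgrad{u}$ --- is exactly the argument the paper intends (no proof is written there; it simply cites \cite[Lemma~2.4]{Ruiz} for $l=2$ and declares the general case analogous).

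There is, however, a real error in your treatment of regions meeting $\partial\Sigma$. You invoke Proposition~\ref{MosTrudBoundWeak} with its constant $4\pi$ and then assert that the localized bound with $16\pi$ on the left holds ``whether or not $\Sigma_i$ touches the boundary''. This is false: Proposition~\ref{MosTrudBoundWeak} controls the \emph{boundary line integral} $\int_{\partial\Sigma}e^u$, not the area integral $\int_{\Sigma_i}e^u$ over a region abutting $\partial\Sigma$. For the latter the correct substitute is a localization of \eqref{equa4} (the cut-off $g_\delta u$ is no longer in $H^1_0(\Sigma)$, so \eqref{equa3} --- hence Proposition~\ref{Local1} --- is unavailable), and the sharp local constant is $8\pi$, not $16\pi$. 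This is precisely why the corollary carries $8l\pi$ rather than $16l\pi$: summing $l$ local estimates, each with constant at least $8\pi$, gives $8l\pi$ directly. If instead one reads the corollary with the implicit hypothesis $(\Sigma_i)^\delta\cap\partial\Sigma=\emptyset$ inherited from Proposition~\ref{Local1} (as the paper's preamble ``satisfying the hypothesis of the propositions above'' suggests), then your summation legitimately yields the stronger $16l\pi$ inequality and the stated $8l\pi$ bound follows a fortiori after absorbing $8l\pi\log|\Sigma|$ into $C$ --- in which case your boundary digression is simply unnecessary. Either way, ``dividing by $16\pi l$'' is not the right arithmetic for passing from your summed inequality to the claim; set $\e=l\e'$ and either read off $8l\pi$ directly (boundary-touching case) or note that $16l\pi$ implies $8l\pi$ up to a constant (interior case).
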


Using the same techniques we can give a localized version of the Proposition \ref{MosTrudBoundWeak}.

\begin{proposition}\label{Local3} Let $\S$ be a compact surface with $C^1$ boundary, and $\Gamma_1\subset \de\S$. Then, for every $\e,\delta>0$ there exists a constant $C\in\R$ depending on $\e$ and $\delta$ such that $$4\pi \log \int_{\Gamma_1}e^u \leq \int_{(\Gamma_1)^\delta}\vert \nabla u \vert ^2 + \e \int_\S \vert \nabla u \vert^2 + C, \hsp \forall u\in H^1(\Sigma) \mbox{ with } \int_\S u = 0$$
\end{proposition}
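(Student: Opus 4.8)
The plan is to mimic the proof of Proposition \ref{Local1}, the local version of the Dirichlet Moser-Trudinger inequality \eqref{equa3}, but replacing the use of \eqref{equa3} by its boundary analogue, the Lebedev-Milin type inequality in Proposition \ref{MosTrudBoundWeak}. First I would fix a smooth cut-off function $\eta \in C^\infty(\overline{\Sigma})$ with $0 \le \eta \le 1$, $\eta \equiv 1$ on a neighborhood of $\Gamma_1$ inside $(\Gamma_1)^{\delta/2}$, and $\supp \eta \subset (\Gamma_1)^\delta$, chosen so that $\|\nabla \eta\|_\infty$ is controlled by a constant depending only on $\delta$ (and $\Sigma$). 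Given $u \in H^1(\Sigma)$ with $\int_\Sigma u = 0$, I would apply Proposition \ref{MosTrudBoundWeak} to the function $v := \eta u$:
\begin{equation*}
\log \int_{\de\Sigma} e^{\eta u} \le \frac{1}{4\pi} \int_\Sigma |\nabla(\eta u)|^2 + \fint_{\de\Sigma} \eta u + C.
\end{equation*}
Since $\eta \equiv 1$ on $\Gamma_1$, the left-hand side dominates $\log \int_{\Gamma_1} e^u$.

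Next I would expand $|\nabla(\eta u)|^2 = \eta^2 |\nabla u|^2 + 2 \eta u \, \nabla \eta \cdot \nabla u + u^2 |\nabla \eta|^2$ and estimate the three terms. The first term is bounded by $\int_{(\Gamma_1)^\delta} |\nabla u|^2$ because $\supp \eta \subset (\Gamma_1)^\delta$ and $\eta \le 1$. For the cross term, by Young's inequality $2\eta u \,\nabla \eta \cdot \nabla u \le \e' \eta^2 |\nabla u|^2 + \frac{1}{\e'} u^2 |\nabla \eta|^2$ for any $\e'>0$, which again contributes at most $\e' \int_{(\Gamma_1)^\delta}|\nabla u|^2$ plus a multiple of $\int_\Sigma u^2$. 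The remaining term $\int_\Sigma u^2 |\nabla \eta|^2$ is controlled by $\|\nabla \eta\|_\infty^2 \int_\Sigma u^2$, and here I would invoke the Poincar\'e inequality: since $\int_\Sigma u = 0$, one has $\int_\Sigma u^2 \le C_\Sigma \int_\Sigma |\nabla u|^2$. The boundary mean-value term $\fint_{\de\Sigma} \eta u$ is likewise bounded, via trace and Poincar\'e inequalities, by $C \left(\int_\Sigma |\nabla u|^2\right)^{1/2}$, hence absorbed into $\e \int_\Sigma |\nabla u|^2 + C$ after another application of Young's inequality. Collecting everything and choosing $\e'$ small relative to the target $\e$, I obtain
\begin{equation*}
4\pi \log \int_{\Gamma_1} e^u \le \int_{(\Gamma_1)^\delta} |\nabla u|^2 + \e \int_\Sigma |\nabla u|^2 + C,
\end{equation*}
with $C$ depending only on $\e$, $\delta$ and $\Sigma$, as desired.

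The main subtlety, rather than obstacle, is bookkeeping the constant in front of $\int_{(\Gamma_1)^\delta}|\nabla u|^2$: the argument naturally yields a factor $(1+\e')$ there rather than exactly $1$, so one must be slightly careful and either accept the extra $\e' \int_{(\Gamma_1)^\delta}|\nabla u|^2 \le \e' \int_\Sigma |\nabla u|^2$ into the global error term (which is legitimate and gives precisely the stated form) or refine the cut-off so that $\eta^2 \le 1$ with equality only where needed. A second point requiring care is that Proposition \ref{MosTrudBoundWeak} is stated for $H^1(\Sigma)$ functions without a mean-value normalization, so the normalization $\int_\Sigma u = 0$ must be used solely through Poincar\'e to bound the lower-order terms; the cut-off $\eta u$ itself does not have zero mean, which is fine since we apply the unnormalized inequality to it. Everything else is a routine combination of Young, Poincar\'e, and trace inequalities on the fixed compact surface $\Sigma$.
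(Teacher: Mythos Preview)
Your overall strategy---localize with a cutoff and apply the boundary Moser--Trudinger inequality to $\eta u$---is the same as the paper's. However, there is a genuine gap in the step where you absorb the lower-order term $\int_\Sigma u^2$ via Poincar\'e.

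After expanding $|\nabla(\eta u)|^2$ and using Young with parameter $\e'$, the contribution from the $u^2$-terms is, in your own notation, of size $\bigl(1+\tfrac{1}{\e'}\bigr)\|\nabla\eta\|_\infty^2\int_\Sigma u^2$. Applying Poincar\'e turns this into $\bigl(1+\tfrac{1}{\e'}\bigr)\|\nabla\eta\|_\infty^2\, C_P \int_\Sigma |\nabla u|^2$. The coefficient here is bounded below by $\|\nabla\eta\|_\infty^2\, C_P$, a fixed positive constant depending on $\delta$ and $\Sigma$ but \emph{independent of} $\e'$; in particular it cannot be made $\le \e$ for small $\e$. (Choosing $\e'$ small, as you suggest, only makes $1/\e'$ larger.) So the argument as written does not yield the stated inequality with an arbitrarily small $\e$ in front of $\int_\Sigma|\nabla u|^2$.

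This is exactly why the paper does not stop at the analogue of your estimate (their inequality (2.11), which still carries a large $C_{\e,\delta}\int_\Sigma u^2$). Instead it uses the Chen--Li truncation trick: one re-applies the preliminary bound to $(u-a)^+$ rather than to $u$. By H\"older and the Sobolev embedding $H^1\hookrightarrow L^4$,
\[
\int_\Sigma \bigl((u-a)^+\bigr)^2 \le |\{u\ge a\}|^{1/2}\,\|(u-a)^+\|_{L^4}^2 \le C\,|\{u\ge a\}|^{1/2}\int_\Sigma |\nabla u|^2,
\]
so choosing $a$ large makes the measure of the super-level set small enough to absorb this term into $\e\int_\Sigma|\nabla u|^2$. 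The additive cost $4\pi a$ is then controlled by $\theta\int_\Sigma|\nabla u|^2 + C_\theta$ via a first-moment estimate (Poincar\'e on $\int_\Sigma|u|$). This extra truncation layer is the missing idea in your proposal.
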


\begin{proof} Following \cite{chen-li}, we consider a cutoff function $g_\delta: \S \to [0,1]$ satisfying
\begin{equation*}
g_\delta=\left\lbrace \begin{array}{ll} 1 & \mbox{ if } x\in\Gamma_1, \\ 0 & \mbox{ si } x\in \Sigma\backslash (\Gamma_1)^{\delta/2}. \end{array} \right.
\end{equation*}
We have $g_\delta u\in H^1(\Sigma)$, hence we can apply Corollary \ref{WeakModified2}:
\begin{align}
4\pi \log \int_{\Gamma_1} e^u &= 4\pi \log \int_{\Gamma_1} e^{g_\delta u} \leq 4\pi \log \int_\deS e^{g_\delta u} \nonumber \\ &\leq \int_\S \vert \nabla(g_\delta u)\vert^2 + 4\pi\fint_\S g_\delta u + C. \label{equa6}
\end{align}
Then,
\begin{align}
\int_\S \vert \nabla(g_\delta u)\vert^2 &= \int_\S u^2 \vert \nabla g_\delta \vert^2 + 2 \int_\S g_\delta u \langle \nabla u, \nabla g_\delta \rangle + \int_\S (g_\delta)^2 \vert \nabla u \vert^2 \nonumber \\
&\leq C_\delta \int_\S u^2 + 2\int_\S g_\delta u \vert \nabla u \vert \vert \nabla g_\delta \vert + \int_{(\Gamma_1)^\delta} \vert \nabla u \vert^2. \label{equa7}
\end{align}
The central term can be bounded using Cauchy's inequality, obtaining
\begin{align}
\int_\S g_\delta u \vert \nabla u\vert \vert \nabla g_\delta \vert \leq C_\delta \int_\S u\vert \nabla u\vert \leq C_{\e,\delta} \int_\S u^2 + \e \int_\S \vert \nabla u \vert ^2. \label{equa8}
\end{align}
Combining \eqref{equa7} and \eqref{equa8}:
\begin{align}
\int_\S \vert \nabla(g_\delta u)\vert^2 \leq \int_{(\Gamma_1)^\delta}\vert \nabla u\vert^2 + \e \int_\S \vert \nabla u \vert^2 + C_{\e,\delta}\int_\S u^2. \label{equa9}
\end{align}
Also, we have the following bound for the mean value of $g_\delta u$ on $\de\S$:
\begin{equation}
\fint_\S g_\delta u \leq \fint_\S \frac{1}{2}((g_\delta)^2+u^2) \leq \frac{1}{2}\fint_\S (g_\delta)^2 + \frac{1}{2\vert \Sigma\vert}\int_\S u^2 \leq C_\delta + C\int_\S u^2. \label{equa10}
\end{equation}
Now, apply both inequalities \eqref{equa9} and \eqref{equa10} to \eqref{equa6} to get:
\begin{equation}
4\pi \log \int_{\Gamma_1}e^u \leq \int_{(\Gamma_1)^\delta} \vert \nabla u\vert^2 + \e \int_\S \vert \nabla u\vert^2 + C_{\e,\delta}\int_\S u^2 + C. \label{equa11}
\end{equation}
Finally we address the term $\int_\S u^2$.

Let $a\in\R$, $\eta = \vert \{x\in\Sigma: u(x)\geq a\}\vert$ and $(u-a)^+ = \max\:\{0,u-a\}$. Clearly, $u\leq (u-a)^+ + a$. We now apply formula \eqref{equa11} to the function $(u-a)^+$:
\begin{align}
4\pi \log \int_{\Gamma_1} e^u &\leq 4\pi \log \left(e^a \int_{\Gamma_1}e^{(u-a)^+} \right) \leq 4\pi a + \log \int_{\Gamma_1} e ^{(u-a)^+} \nonumber \\ &\leq 4\pi a + \int_{(\Gamma_1)^\delta} \vert \nabla(u-a)^+\vert^2 + \e \int_\S \vert \nabla(u-a)^+\vert^2 + C_{\e,\delta}\int_\S \left((u-a)^+\right)^2 \nonumber\\ &\leq 4\pi a + \int_{(\Gamma_1)^\delta}\vert \nabla u\vert^2 + \e \int_\S\vert \nabla u \vert ^2 + C_{\e,\delta}\int_\S \left((u-a)^+\right)^2. \label{equa12}
\end{align}
By means of Sobolev, H\"{o}lder and Poincar\'{e}-Wirtinger inequalities:
\begin{align} \label{equa13}
\int_\S \left((u-a)^+\right)^2 &= \int_{\{x\in\Sigma:\: u(x)\geq a\}} \left((u-a)^+ \right)^2 \leq \eta^{1/2} \left(\int_\S \left((u-a)^+\right)^4\right)^{1/2} \nonumber \\ &\leq \eta^{1/2} \Vert (u-a)^+\Vert^2_{H^1(\Sigma)} \leq C\eta^{1/2}\int_\S\vert \nabla u\vert^2.
\end{align}
Again by Poincar\'{e}-Wirtinger inequality:
\begin{align} \label{equa14}
a\eta \leq \int_{\{x\in\Sigma:\: u(x)\geq a\}} u \leq \int_\S \vert u \vert \leq C \left(\int_\S \vert u\vert^2\right)^{1/2} \leq C\left(\int_\S\vert \nabla u \vert^2\right)^{1/2}.
\end{align}
From \eqref{equa14}, using Cauchy's inequality:
\begin{align} \label{equa15}
a \leq \theta \int_\S \vert\nabla u\vert^2 + \frac{C^2}{\eta^2 \theta}, \hsp \forall \theta >0.
\end{align}
Mixing \eqref{equa12}, \eqref{equa13} and \eqref{equa15}:
\begin{align*}
4\pi\log\int_{\Gamma_1} e^u \leq 4\pi \theta \int_\S \vert\nabla u\vert^2 + \int_{(\Gamma_1)^\delta} \vert\nabla u\vert^2 + \e \int_\S \vert\nabla u \vert^2 + C_{\e,\delta}\eta^{1/2}\int_\S \vert \nabla u\vert^2 + C,
\end{align*}
and it is enough to take $\theta = \frac{1}{4\pi}$ and $\eta^{1/2}\leq \frac{\e}{C_{\e,\delta}}$ to conclude.
\end{proof}

\begin{corollary} \label{Local3Corollary} Let $\S$ be a compact surface with $C^1$ boundary, $l\in\N$ and $\Gamma_1, \ldots, \Gamma_l \subset \partial\Sigma$ for which there exists a $\delta>0$ such that $(\Gamma_i)^\delta\cap(\Gamma_j)^\delta = \emptyset$ if $i\neq j$. Moreover, assume that there exists $\gamma \in \:(0,\frac{1}{l})$ such that 

\begin{equation} \label{nueva} \frac{\int_{\Gamma_i}e^u}{\int_\deS e^u}\geq \gamma,\hsp \forall i=1,\ldots,l .\end{equation}

Then, for every $\e>0$ there exists a constant $C\in\R$, depending on $\e, \delta$ and $\gamma$, such that 
\begin{align*}
4l\pi \log \int_\deS e^u \leq \int_\S \vert \nabla u \vert^2 + \e \int_\S \vert \nabla u \vert^2 + C, \hsp \forall u\in H^1(\Sigma) \mbox{ with } \int_\S u = 0.
\end{align*}
\end{corollary}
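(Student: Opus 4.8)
The plan is to reduce to the single–region statement of Proposition \ref{Local3} applied separately on each $\Gamma_i$, and then exploit the disjointness of the enlarged sets $(\Gamma_i)^\delta$ together with the mass lower bound \eqref{nueva}. This mirrors the argument for Corollary \ref{Local2Corollary}, and for the case $l=2$ it is exactly \cite[Lemma 2.4]{Ruiz}.

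First, fix $\e>0$ and set $\e' = \e/l$. For each $i\in\{1,\dots,l\}$, Proposition \ref{Local3} (with the same $\delta$) provides a constant $C_i = C_i(\e',\delta)$ such that
$$4\pi \log \int_{\Gamma_i} e^u \leq \int_{(\Gamma_i)^\delta} |\nabla u|^2 + \e' \int_\S |\nabla u|^2 + C_i$$
for all $u\in H^1(\Sigma)$ with $\int_\S u = 0$. The hypothesis \eqref{nueva} yields $\log \int_{\Gamma_i} e^u \geq \log \int_{\deS} e^u + \log\gamma$, so each of these $l$ inequalities can be rewritten as
$$4\pi \log \int_{\deS} e^u \leq \int_{(\Gamma_i)^\delta} |\nabla u|^2 + \e' \int_\S |\nabla u|^2 + C_i - 4\pi \log\gamma.$$

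Next I would simply sum these $l$ inequalities. On the left this produces $4l\pi \log\int_{\deS} e^u$. On the right, the disjointness $(\Gamma_i)^\delta\cap(\Gamma_j)^\delta = \emptyset$ for $i\neq j$ gives $\sum_{i=1}^l \int_{(\Gamma_i)^\delta} |\nabla u|^2 \leq \int_\S |\nabla u|^2$; the term $\sum_{i=1}^l \e' \int_\S |\nabla u|^2$ equals $\e\int_\S |\nabla u|^2$ by the choice $\e' = \e/l$; and the constants collect into a single $C = C(\e,\delta,\gamma)$, since $l$ is fixed by the data. This delivers exactly the claimed estimate.

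There is essentially no deep obstacle here: the only points requiring a little care are the bookkeeping of constants — verifying that after absorbing the $\log\gamma$ terms and rescaling $\e'$ the dependence is only on $\e,\delta,\gamma$ (and the fixed $l$) — and checking that the normalization $\int_\S u = 0$ needed to invoke Proposition \ref{Local3} coincides with the one assumed in the statement, which it does.
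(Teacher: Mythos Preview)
Your proof is correct and follows essentially the same approach as the paper: apply Proposition \ref{Local3} to each $\Gamma_i$, convert the left-hand side using \eqref{nueva}, and sum over $i$ using disjointness of the $(\Gamma_i)^\delta$. The only cosmetic difference is that you preemptively rescale $\e' = \e/l$, whereas the paper sums first to get $\e l$ on the right and implicitly relabels; both are equivalent.
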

\begin{proof}
First, we apply to each $\Gamma_i$ the previous result, obtaining
$$4\pi \log \int_{\Gamma_i} e^u \leq \int_{(\Gamma_i)^\delta}\vert \nabla u \vert^2 + \e \int_\S \vert \nabla u\vert^2 + C.$$
Using \eqref{nueva}, we obtain:
\begin{align*}
4\pi \log \int_{\Gamma_i} e^u \geq 4\pi \log \int_\deS e^u + C.
\end{align*}
Then,
\begin{align*}
4\pi \log \int_\deS e^u \leq \int_{(\Gamma_i)^\delta} \vert \nabla u\vert^2 + \e \int_\S\vert\nabla u\vert^2 + C.
\end{align*}
Finally, summing on $i\in\{1,\ldots,l \}$:
\begin{align*}
4l\pi \log \int_\deS e^u &\leq \int_{\bigsqcup_{i} (\Gamma_i)^\delta} \vert\nabla u\vert^2 + \e l \int_\S \vert\nabla u\vert^2 + C \\ &\leq \int_\S \vert \nabla u \vert^2 + \e l \int_\S \vert\nabla u\vert^2 + C.
\end{align*}
\end{proof}
We have just seen that the more regions the mass of a function is separated in, the better bounds we obtain using the local versions of the Moser-Trudinger inequalities. If a $H^1(\D)$ function is concentrated in an interior point of the disk, Proposition \ref{Local1} gives us a lower bound which is sufficient to achieve coercivity, but that is not the case when a function concentrates around a boundary point. To avoid this we will restrict ourselves to consider functions satisfying a symmetry condition guaranteeing that a function cannot concentrate around a single point of the boundary. Hence we will obtain coercivity by interpolating \ref{Local2Corollary} and \ref{Local3Corollary} with $l=2$.
\begin{figure}[h]
\includegraphics[scale=0.3]{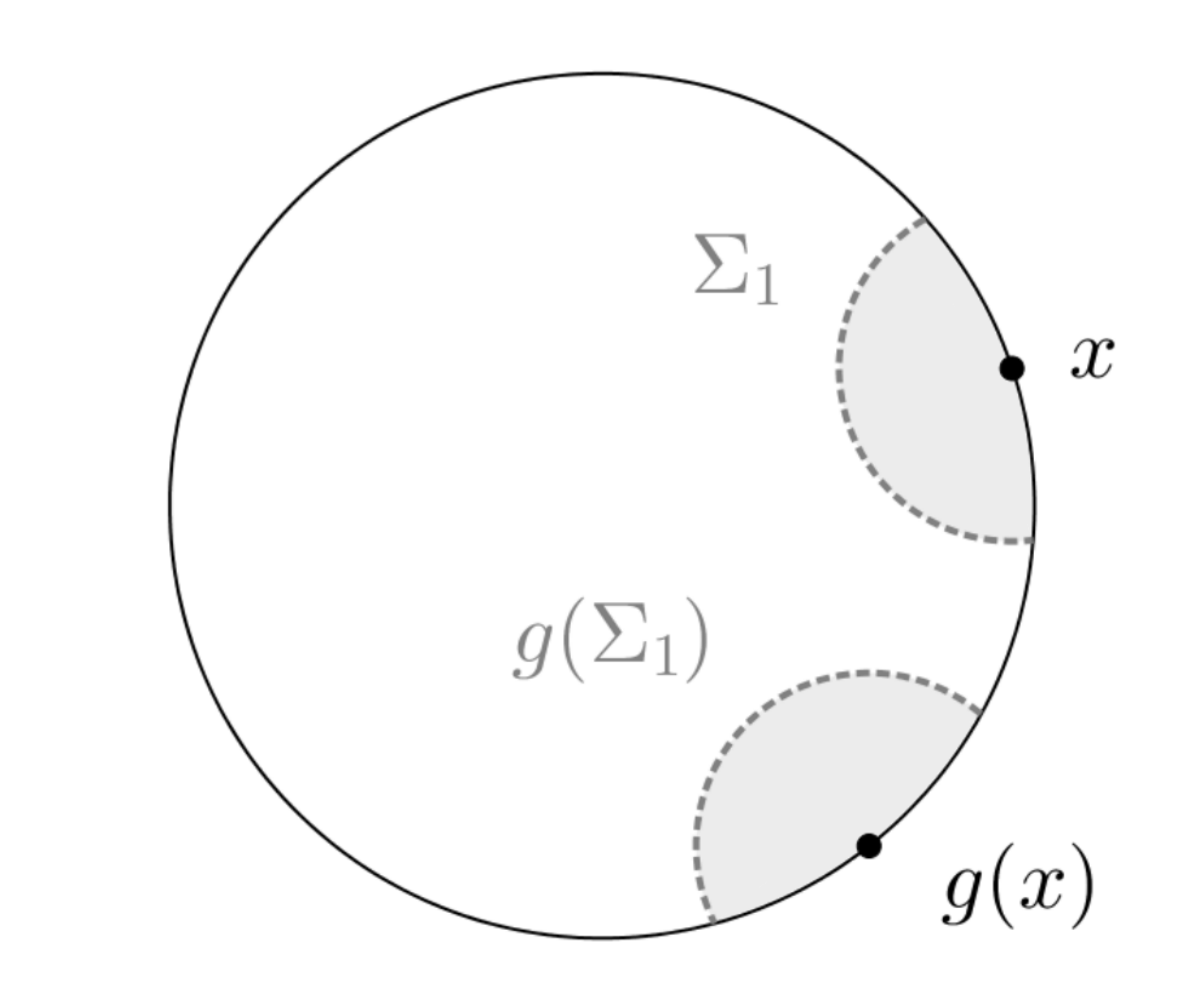}
\caption{If a symmetric function concentrates around $x\in\Sph$, then it also concentrates around $g(x)$ for all $g\in G$.}
\end{figure}

We let $G$ be a subgroup of the orthogonal transformation group of $\D$ such that the set of fixed points on $\Sph$ under the action of $G$ is empty, in other words,
\begin{equation*}
\{x\in\Sph: g(x)=x \hsp \forall g\in G\} = \emptyset.
\end{equation*}
For instance, we can take $G$ as the group of rotations generated by $g(z)= e^{\frac{2 \pi i}{k}}z$,  as well as the dihedral groups $\mathbb{D}_k$ ($k \in \mathbb{N}$, $k>1$).

In the sequel, $K$ and $h$ will be assumed to be $G-$symmetric functions, and we denote $H^1_G(\D)= \{ u \in H^1(\D):\ u \circ g = u \ \forall \ u \in G \}$, and: 
\begin{equation*}
\X_G = \{u\in \X: u\circ g = u \hsp \forall g\in G\}.
\end{equation*}

As in Lemma \ref{notempty} we observe that if $K$ and $h$ are G-symmetric functions somewhere positive, then $\X_G$ is not empty.

\begin{proposition}\label{Coercivity} Given $\rho\in \: [0,2\pi]$, the functional $I_\rho$ is coercive on $\X_G$, that is, $$I_\rho(u)\to +\infty \hsp (\Vert u\Vert_{H^1(\D)}\to +\infty, u\in \X_G).$$
\end{proposition}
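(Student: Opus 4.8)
The plan is to show that if $\|u\|_{H^1(\D)}\to+\infty$ with $u\in\X_G$, then $I_\rho(u)\to+\infty$, uniformly in $\rho\in[0,2\pi]$. Since $I_\rho$ is invariant under adding constants and the $\int_{\Sph}u$ term together with the gradient term control the full $H^1$ norm once we normalize, the first step is to reduce to the case $\int_{\D}u=0$; then $\|u\|_{H^1(\D)}\to\infty$ is equivalent to $\|\nabla u\|_{L^2(\D)}^2\to\infty$ by Poincar\'e--Wirtinger, and the linear boundary term $2\int_{\Sph}u$ is bounded below by $-C\|\nabla u\|_{L^2}$ (trace plus Poincar\'e), hence negligible compared to a term quadratic in $\|\nabla u\|_{L^2}$. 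As in the proof of Proposition \ref{LowerBound}, replacing $K,h$ by constants costs only an additive constant, so it suffices to bound $2\rho\log\int_{\D}e^u + 4(2\pi-\rho)\log\int_{\Sph}e^{u/2}$ from above by something like $\theta\|\nabla u\|_{L^2}^2 + C_\theta$ with a coefficient $\theta$ strictly smaller than $\tfrac12$ (the coefficient of the gradient term in $I$), for all $\rho$; the correction term $f(\rho)$ is bounded on $[0,2\pi]$ and can be ignored.

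The heart of the matter is the boundary term. Writing $v=u/2$, we must estimate $\log\int_{\Sph}e^{v}$. The naive bound from Proposition \ref{MosTrudBoundWeak} gives coefficient $\tfrac{1}{4\pi}$ in front of $\int_{\D}|\nabla v|^2 = \tfrac14\int_{\D}|\nabla u|^2$, i.e. coefficient $\tfrac{1}{16\pi}$ for $\int|\nabla u|^2$, which when multiplied by $4(2\pi-\rho)\le 8\pi$ yields $\tfrac12\int|\nabla u|^2$ — exactly the borderline, no room. The key improvement is to use $G$-symmetry: because $G$ has no fixed point on $\Sph$, there is an integer $l\ge 2$ and a $\delta>0$ such that for every $x\in\Sph$ the orbit $Gx$ contains $l$ points $x_1,\dots,x_l$ with the $\delta$-balls $(\{x_i\})^\delta$ pairwise disjoint. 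I would argue by dichotomy: fix a small $\gamma\in(0,1/l)$ and a covering of $\Sph$ by finitely many arcs of length $\sim\delta$; either some arc $A$ carries mass fraction $\int_A e^{u/2}\big/\int_{\Sph}e^{u/2} < \gamma$ away from all orbit points, or, more relevantly, by symmetry the mass near any heavily-weighted boundary arc is automatically replicated on the $l-1$ other arcs of its orbit, so one can produce $l$ disjoint arcs $\Gamma_1,\dots,\Gamma_l$ each carrying a fraction $\ge\gamma$ of $\int_{\Sph}e^{u/2}$. Then Corollary \ref{Local3Corollary} applies to $u/2$ and gives
\begin{align*}
4l\pi\log\int_{\Sph}e^{u/2}\le \frac14\int_{\D}|\nabla u|^2 + \e\int_{\D}|\nabla u|^2 + C,
\end{align*}
so $4(2\pi-\rho)\log\int_{\Sph}e^{u/2}\le \tfrac{2\pi-\rho}{2l\pi}\big(\tfrac14+\e\big)\int_{\D}|\nabla u|^2 + C \le \big(\tfrac{1}{4l}+\e'\big)\int_{\D}|\nabla u|^2+C$, and with $l\ge2$ this coefficient is at most $\tfrac18+\e'$, comfortably below $\tfrac12$.

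For the interior term $2\rho\log\int_{\D}e^u$ with $\int_\D u=0$, the plain inequality \eqref{equa3}/\eqref{equa4} already gives coefficient $\tfrac{1}{16\pi}$ (for $u$ with zero mean, via Corollary \ref{WeakModified} and absorbing $\fint_{\Sph}u\le C\|\nabla u\|_{L^2}$), so $2\rho\log\int_{\D}e^u\le \tfrac{\rho}{8\pi}\int|\nabla u|^2 + C\le \tfrac14\int|\nabla u|^2+C$ — but this alone plus the $\tfrac18$ from the boundary leaves $\tfrac12-\tfrac14-\tfrac18=\tfrac18>0$ of the gradient term, so no symmetry refinement is even needed there; if one wanted a cleaner margin, Corollary \ref{Local2Corollary} with $l=2$ could be invoked when the interior mass splits, but it is not necessary. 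Collecting everything,
\begin{align*}
I_\rho(u)\ge \Big(\tfrac12 - \tfrac14 - \tfrac18 - \e'\Big)\int_{\D}|\nabla u|^2 - C\|\nabla u\|_{L^2} - C \ \xrightarrow[\ \|\nabla u\|_{L^2}\to\infty\ ]{}\ +\infty,
\end{align*}
uniformly in $\rho\in[0,2\pi]$, which is the assertion. The main obstacle, and the only genuinely delicate point, is the combinatorial/geometric step of extracting from $G$-symmetry the $l\ge2$ disjoint boundary arcs each carrying a fixed positive fraction of the boundary mass $\int_{\Sph}e^{u/2}$, with $l,\delta,\gamma$ independent of $u$; once that is in place, everything else is bookkeeping with the inequalities already established.
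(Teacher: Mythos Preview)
There is a genuine gap in the treatment of the interior term. You claim that ``the plain inequality \eqref{equa3}/\eqref{equa4} already gives coefficient $\tfrac{1}{16\pi}$'' and deduce $2\rho\log\int_\D e^u\le\tfrac{\rho}{8\pi}\int|\nabla u|^2+C\le\tfrac14\int|\nabla u|^2+C$. But \eqref{equa3} carries the constant $\tfrac{1}{16\pi}$ only under the hypothesis $u\in H^1_0(\Sigma)$, which is not available here; for general $u\in H^1(\D)$ with $\int_\D u=0$ the sharp constant is $\tfrac{1}{8\pi}$, as in \eqref{equa4} and Corollary~\ref{WeakModified}. With the correct constant one gets
\[
2\rho\log\int_\D e^u\ \le\ \frac{\rho}{4\pi}\int_\D|\nabla u|^2+C,
\]
and at $\rho=2\pi$ this already equals $\tfrac12\int|\nabla u|^2$, exhausting the entire Dirichlet energy. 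There is also a factor-of-two slip in your boundary bookkeeping: from $4l\pi\log\int_{\Sph}e^{u/2}\le(\tfrac14+\e)\int|\nabla u|^2+C$ one obtains, after multiplying by $4(2\pi-\rho)/(4l\pi)$, the coefficient $\tfrac{2\pi-\rho}{l\pi}(\tfrac14+\e)$, not $\tfrac{2\pi-\rho}{2l\pi}(\tfrac14+\e)$; with $l=2$ this is $\tfrac14+\e'$ at $\rho=0$, not $\tfrac18$. Putting the two corrected estimates together, your scheme (plain interior, improved boundary) yields total coefficient
\[
\frac{\rho}{4\pi}+\frac{(2\pi-\rho)(1+\e)}{8\pi}=\frac{2\rho+(2\pi-\rho)(1+\e)}{8\pi},
\]
which equals $\tfrac12$ at $\rho=2\pi$. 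So the argument is exactly borderline there and gives no coercivity; the claim ``no symmetry refinement is even needed'' for the interior is therefore false.

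The paper's proof fixes this by applying the \emph{improved} interior inequality as well: it uses Corollary~\ref{Local2Corollary} with $l=2$ (justified by $G$-symmetry away from the origin, and by Proposition~\ref{Local1} if mass concentrates near the origin, an interior point) together with Corollary~\ref{Local3Corollary} with $l=2$ on the boundary. With $a=2\rho$, $b=4(2\pi-\rho)$ one is then left with $\tfrac{16\pi-2a-b}{32\pi}=\tfrac14$ of the Dirichlet energy uniformly in $\rho$, which gives the coercivity. Your outline of the boundary splitting via $G$-symmetry is correct in spirit; the missing ingredient is that the \emph{same} doubling must be performed for $\log\int_\D e^u$.
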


\begin{proof} Take a sequence $(u_n)$ in $\X_G$. We know that $I_{\rho}$ is invariant under the addition of constants, so we can assume that $\int_\D u_n = 0$ for every $n\in\N$. We have
\begin{align*}
I_\rho(u_n)&\geq \frac{1}{2}\int_\D\vert \nabla u_n\vert^2 -2\rho \log \int_\D e^{u_n} -4(2\pi-\rho)\log \int_\Sph e^{u_n/2} + 2\int_\Sph u_n + C.
\end{align*}
Then, for any $a,b\in\R$ one has:
\begin{align*}
I_\rho(u_n) &\geq \frac{16\pi - 2a -b}{32\pi} \int_\D\vert \nabla u_n\vert^2 + \frac{a}{16\pi}\int_\D \vert \nabla u_n\vert^2 + \frac{b}{32\pi}\int_\D \vert \nabla u_n\vert^2 + 2\int_\Sph u_n \\ &-2\rho \log \int_\D e^{u_n} -4(2\pi -\rho)\log\int_\Sph e^{u_n/2}.
\end{align*}
We can now apply Corollaries \ref{Local2Corollary} and \ref{Local3Corollary} with $l=2$:
\begin{align*}
I_\rho(u_n)&\geq \frac{16\pi-2a-b}{32\pi}\int_\D \vert \nabla u_n\vert^2 + a\log\int_\D e^{u_n} - a \e \int_\D \vert \nabla u_n\vert^2 + b \log \int_\Sph e^{u_n/2} \\ &-b \e \int_\D \vert \nabla u_n\vert^2 - 2\rho \log \int_\D e^{u_n} - 4(2\pi-\rho)\log \int_\Sph e^{u_n/2} + 2 \int_\Sph u_n + C.
\end{align*}
Choosing $a=2\rho$ and $b=4(2\pi-\rho)$ and applying the trace inequality:
\begin{align*}
I_\rho(u_n) \geq \left(\frac{1}{4}-\e\right) \int_\D \vert \nabla u_n\vert^2 - 2C_2 \Vert u_n \Vert_{H^1(\D)} + C, \hsp C_2 >0.
\end{align*}
Finally, taking $\e$ small enough and using the Poincar\'{e}-Wirtinger inequality we obtain
\begin{equation}
I_\rho(u_n) \geq C_1 \Vert u_n \Vert_{H^1(\D)} ^2 - C_2 \Vert u_n \Vert_{H^1(\D)} + C, \hsp C_1, C_2 >0. \label{equa16}
\end{equation}
Again, we remark that the constant $C_1$ is independent of $\rho$.
\end{proof}

\section{Proof of Theorem \ref{TheoremNonNegative} and its generalization}
\label{sect3}
\setcounter{equation}{0} 

We begin this section considering the limiting cases $\rho=0$ and $\rho=2\pi$. These cases have their own interest, as will be shown, but their study will be useful also for the proof of Theorems \ref{TheoremNonNegative}, \ref{TheoremPerturbed} and \ref{TheoremGeneral}.

\medskip Observe that:

\begin{equation} \label{I(u,0)} I(u,0) = \frac{1}{2} \int_\D \vert \nabla u\vert^2  + 2\int_{\Sph} u - 8 \pi \log\int_{\Sph} he^{u/2} + 8\pi \log(2\pi),\end{equation}
and, as $K$ does not play any role, it can be defined on the bigger space
$$\X^1=\left\lbrace u\in H^1(\D): \int_\Sph he^{u/2} > 0 \right\rbrace \supset \X.$$
The critical points of $I_0$ on $\X^1$ are weak solutions of the problem
\begin{align*}
\left\lbrace \begin{array}{cc} -\Delta u = 0 & \mbox{ in } \D, \\[0.15cm] \frac{\partial u}{\partial \eta} + 2 = 4\pi \frac{he^{u/2}}{\int_\Sph he^{u/2}} & \mbox{ on } \Sph,
\end{array} \right.
\end{align*}
which is clearly equivalent to the problem of prescribing Gaussian curvature $K=0$ and geodesic curvature $h$, that is,

\begin{align}
\left\lbrace \begin{array}{cc} -\Delta u = 0 & \mbox{ in } \D, \\[0.15cm] \frac{\partial u}{\partial \eta} + 2 = 2h e ^{u/2} & \mbox{ on } \Sph.
\end{array} \right.\label{Probl4}
\end{align}

Under the hypothesis that $h$ is $G$-symmetric, we can seek a minimizer of $I_0$ on the space of symmetric functions $$\X^1_G = \left\lbrace u\in \X^1:u\circ g = u \hsp \forall g\in G \right\rbrace.$$

\begin{theorem} \label{TheoremGeodesic} Let $h: \Sph \to \R$ be a H\"{o}lder continuous, somewhere positive $G-$symmetric function. Then Problem \eqref{Probl4} admits a solution as a minimum of $I_0$ on $\X^1_G$.
\end{theorem}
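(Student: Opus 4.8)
The plan is to obtain the solution via the direct method: show that $I_0$ is bounded below and coercive on $\X^1_G$, extract a minimizing sequence, pass to a weak limit, and verify that the limit is an admissible minimizer whose Euler--Lagrange equation is exactly \eqref{Probl4}. The lower bound is already essentially contained in Proposition \ref{LowerBound} (taking $\rho = 0$), so the real work is coercivity on the symmetric class and the semicontinuity/admissibility of the limit.

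First I would establish coercivity of $I_0$ on $\X^1_G$. The argument mirrors the proof of Proposition \ref{Coercivity} but is in fact simpler, since only the boundary nonlinearity $\log\int_\Sph h e^{u/2}$ is present. Normalizing $\int_\D u = 0$, one bounds $\log\int_\Sph h e^{u/2} \le \log\int_\Sph e^{u/2} + C$, and then applies Corollary \ref{Local3Corollary} with $l = 2$: here the $G$-symmetry is essential, because if the mass of $e^{u/2}$ on $\Sph$ were to concentrate near a single point $x \in \Sph$, symmetry forces equal concentration near $g(x)$ for all $g \in G$, and since $G$ has no fixed point on $\Sph$ there are at least two well-separated concentration arcs $\Gamma_1, \Gamma_2$. (If no such concentration occurs, one may split $\Sph$ into two fixed disjoint arcs and the hypothesis \eqref{nueva} holds trivially with $\gamma$ a fixed fraction.) This yields
\begin{equation*}
8\pi \log \int_\Sph e^{u/2} \le \int_\D |\nabla u|^2 + \e \int_\D |\nabla u|^2 + C,
\end{equation*}
and hence, combining with the trace inequality to control $2\int_\Sph u$ and then Poincar\'{e}--Wirtinger,
\begin{equation*}
I_0(u) \ge \left(\tfrac12 - 2 \cdot \tfrac{1}{8\pi}\cdot 4\pi - \e\right)\int_\D |\nabla u|^2 - C_2 \|u\|_{H^1(\D)} + C = C_1 \|u\|_{H^1(\D)}^2 - C_2 \|u\|_{H^1(\D)} + C
\end{equation*}
with $C_1 > 0$ once $\e$ is small; note the leading coefficient from the two boundary-Onofri contributions is $\tfrac12 - \tfrac12 > 0$ only marginally, so one must be slightly careful, distributing the Dirichlet energy as in Proposition \ref{Coercivity} rather than naively. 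This is the step I expect to require the most care, precisely because of the borderline constant; the symmetry assumption is exactly what rescues it.

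Given coercivity and the lower bound, take a minimizing sequence $(u_n) \subset \X^1_G$ with $\int_\D u_n = 0$; it is bounded in $H^1(\D)$, so up to a subsequence $u_n \weakto u$ in $H^1(\D)$, strongly in $L^2(\D)$ and in $L^2(\Sph)$, and by the Moser--Trudinger inequality \eqref{equa4} (applied to $u_n/2$) $e^{u_n/2}$ is bounded in $L^p(\Sph)$ for every $p$, so $e^{u_n/2} \to e^{u/2}$ in $L^1(\Sph)$; in particular $\int_\Sph h e^{u/2} = \lim \int_\Sph h e^{u_n/2} > 0$ is bounded away from $0$ along the sequence (otherwise $I_0(u_n) \to +\infty$), so the limit $u$ lies in $\X^1$, and it lies in $\X^1_G$ since weak limits preserve the linear symmetry constraint. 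By weak lower semicontinuity of the Dirichlet energy and continuity of the remaining terms under the established convergences, $I_0(u) \le \liminf I_0(u_n) = \inf_{\X^1_G} I_0$, so $u$ is a minimizer. Finally, since $\X^1_G$ is open in $H^1_G(\D)$ and $I_0$ is $C^1$ there, $u$ is a critical point of $I_0$ restricted to $H^1_G(\D)$; by the principle of symmetric criticality it is a critical point of $I_0$ on all of $H^1(\D)$, hence a weak solution of \eqref{Probl4}, and elliptic regularity together with the H\"{o}lder continuity of $h$ upgrades it to a classical solution. This completes the proof.
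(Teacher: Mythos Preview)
Your proposal is correct and follows essentially the same route as the paper: invoke Proposition~\ref{LowerBound} for the lower bound, Proposition~\ref{Coercivity} (specialized to $\rho=0$) for coercivity on the symmetric class, extract a weakly convergent minimizing sequence, and pass to the limit using weak lower semicontinuity of the Dirichlet term together with convergence of $\int_\Sph u_n$ and $\int_\Sph h e^{u_n/2}$, noting the latter stays positive since otherwise $I_0(u_n)\to+\infty$. Your displayed coercivity arithmetic is a bit off (applying Corollary~\ref{Local3Corollary} with $l=2$ to $u/2$ actually leaves a clean $\tfrac14$ of the Dirichlet energy, not a borderline $0$), but since you correctly defer to Proposition~\ref{Coercivity} this is harmless; you also make explicit the symmetric-criticality and regularity steps that the paper leaves implicit.
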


\begin{proof} The functional is bounded from below as seen in Proposition \ref{LowerBound}, so there exists $$\alpha = \inf_{u\in\X^1_G}I_0(u).$$
Let $(u_n)$ be a minimizing sequence in $\X^1_G$, that is, $I_0(u_n)\to \alpha$. By Proposition \ref{Coercivity} we know that $I_0$ is coercive so $u_n$ is bounded in the $H^1(\D)$ norm and we can assume that, there exists $u_0$ in $H^1(\D)$ such that, up to a subsequence, $u_n\weakto u_0$. Then, we also have $$\int_\Sph u_n \to \int_\Sph u_0,\hsp \int_\Sph he^{u_n/2} \to \int_\Sph he^{u_0/2}.$$
Combining this information with the fact that the function $u\to \int_\D \modgrad{u}$ is weakly lower semicontinuous, we have $I_0(u_0)\leq \alpha$. It is easy to check that $\int_\Sph he^{\frac{u_0}{2}}>0$, because if we had $\int_\Sph he^{u_n/2}\to 0$ then $I_0(u_n)\to +\infty$, which contradicts that $u_n$ is minimizing. Also, notice that weak convergence respect symmetry, so $u_0$ is a $G-$symmetric function.
\end{proof}

Analogously, we can consider the functional related to the limiting case $\rho= 2\pi$, 

 \begin{equation} \label{I(u, 2pi)} I(u, 2\pi)= \frac{1}{2} \int_\S \modgrad{u} + 2\int_\Sph u -4\pi \log \int_\D K e^u + 4\pi +4\pi\log(2\pi). \end{equation}
defined on the space
$$\X_G^2 = \left\{u\in H^1_G(\D): \int_\D Ke^u >0 \right\} \supset \X_G.$$
One can check that its variation with respect to $u$ produces weak solutions of the problem
\begin{align*}
\left\lbrace \begin{array}{cc} -\Delta u = 4\pi\frac{Ke^u}{\int_\D Ke^u} & \mbox{ in } \D, \\[0.15cm] \frac{\partial u}{\partial \eta} + 2 = 0 & \mbox{ on } \Sph,
\end{array} \right.
\end{align*}
which is equivalent to the problem of prescribing geodesic curvature $h=0$ and Gaussian curvature $K$:
\begin{align}
\left\lbrace \begin{array}{cc} -\Delta u = 2Ke^u & \mbox{ in } \D, \\[0.15cm] \frac{\partial u}{\partial \eta} + 2 = 0 & \mbox{ on } \Sph.
\end{array} \right. \label{Probl5}
\end{align}

A trivial adaptation of the proof of Theorem \ref{TheoremGeodesic} gives the following:

\begin{theorem} \label{TheoremGaussian} Let $K: \D \to \R$ be a H\"{o}lder continuous, somewhere positive $G-$symmetric function. Then Problem \eqref{Probl4} admits a solution as a minimum of $I_{2\pi}$ on $\X^2_G$.
\end{theorem}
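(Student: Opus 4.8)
The plan is to repeat verbatim the direct-method argument used for Theorem \ref{TheoremGeodesic}, with the boundary exponential term replaced by the interior one. By Proposition \ref{LowerBound} the functional $I_{2\pi}$ is bounded from below on $\X_G^2$, so $\beta := \inf_{u\in \X_G^2} I_{2\pi}(u)$ is finite; choose a minimizing sequence $(u_n)$. Since $I_{2\pi}$ is invariant under the addition of constants, we may normalize $\int_\D u_n = 0$. Proposition \ref{Coercivity} (with $\rho = 2\pi$) then forces $\|u_n\|_{H^1(\D)}$ to stay bounded, so, up to a subsequence, $u_n \weakto u_0$ in $H^1(\D)$ for some function $u_0$; weak limits of $G$-symmetric functions are $G$-symmetric, so $u_0 \in \Hs$.

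Next I would pass to the limit in each term of \eqref{I(u, 2pi)}. The Dirichlet energy $u\mapsto \int_\D \modgrad{u}$ is weakly lower semicontinuous; the trace operator $H^1(\D)\to L^2(\Sph)$ is compact, so $\int_\Sph u_n \to \int_\Sph u_0$; and the additive constants are inert. The only delicate term is $\log\int_\D K e^{u_n}$, where I would show that $e^{u_n}\to e^{u_0}$ strongly in $L^1(\D)$. Indeed, by the Moser-Trudinger inequality \eqref{equa4} the boundedness of $\|u_n\|_{H^1(\D)}$ gives a uniform bound on $\|e^{t u_n}\|_{L^1(\D)}$ for every $t<\infty$, hence $(e^{u_n})$ is uniformly integrable; since $u_n\to u_0$ in $L^2(\D)$ and thus a.e.\ along a further subsequence, Vitali's convergence theorem yields $e^{u_n}\to e^{u_0}$ in $L^1(\D)$, and therefore $\int_\D K e^{u_n}\to \int_\D K e^{u_0}$.

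It remains to check that $u_0\in\X_G^2$, i.e.\ that $\int_\D K e^{u_0}>0$. If instead $\int_\D K e^{u_n}\to\int_\D K e^{u_0}\le 0$, then $-4\pi\log\int_\D K e^{u_n}\to +\infty$ while every other term of $I_{2\pi}(u_n)$ stays bounded below, so $I_{2\pi}(u_n)\to +\infty$, contradicting that $(u_n)$ is minimizing. Hence $\int_\D K e^{u_0}>0$, so $u_0\in\X_G^2$, and combining the semicontinuity and continuity statements above we get $I_{2\pi}(u_0)\le \liminf_n I_{2\pi}(u_n) = \beta$, so $u_0$ is a minimizer. Since $G$ acts on $\D$ by orthogonal transformations and $I_{2\pi}$ is $G$-invariant, the principle of symmetric criticality shows $u_0$ is a critical point of $I_{2\pi}$ on all of $\X^2$, hence a weak solution of the associated Euler-Lagrange system; Hölder continuity of $K$ and elliptic regularity upgrade it to a classical solution of \eqref{Probl5}.

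The main obstacle is exactly the passage to the limit in $\int_\D K e^{u_n}$ together with the exclusion of the degenerate case $\int_\D K e^{u_0}\le 0$; this is where the interior Moser-Trudinger inequality \eqref{equa4} is essential, playing the role that Proposition \ref{MosTrudBoundWeak} played in Theorem \ref{TheoremGeodesic}. Everything else is the standard direct method, which is why the adaptation is indeed "trivial".
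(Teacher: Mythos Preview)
Your proof is correct and follows exactly the route the paper indicates: a trivial adaptation of the direct-method argument of Theorem~\ref{TheoremGeodesic}, replacing the boundary exponential term by the interior one. The only difference is that you spell out the $L^1$-convergence of $e^{u_n}$ via Moser--Trudinger and Vitali, and add the symmetric-criticality and regularity remarks, all of which the paper leaves implicit.
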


\begin{remark} The existence result of Theorem \ref{TheoremGeodesic} is known, see for instance \cite{liuwang}. We have not found a explicit statement of the existence result of Theorem \ref{TheoremGaussian}, but we guess that it must be also known. However in this section we have reinterpreted those solutions as minimizers of $I_0$ and $I_{2\pi}$, respectively. This will be of use in what follows.

\end{remark}

%We consider $K$ and $h$ continuous, non-negative and $G-$symmetric functions. If one of the curvatures is identically null, then the equation 
%$$\int_\D K e^u + \int_\Sph he^{\frac{u}{2}} = 2\pi$$
%tells us that the other curvature must be positive somewhere, so we are in one of the limiting cases previously treated. Then, we can assume that both curvature functions are positive in some point.
%	
%In this case, the space $\X$ coincides with $H^1(\D)$, and the coercivity of each functional $I_{\rho}$ in $\X_G$ implies the existence of a minimum depending on $\rho$. In order to get a global minimum for $I$ we must verify that the function $\rho\to \min_{\X_G}I_\rho$ does not achieve its minimum in the boundary values $\{0,2\pi\}$.
%	

Let us now conclude the proof of Theorem \ref{TheoremNonNegative}.

\begin{proof}[Proof of Theorem \ref{TheoremNonNegative}.]

If $K =0$ or $h = 0$, then we are under the assumptions of  Theorem \ref{TheoremGeodesic} or Theorem \ref{TheoremGaussian}. Then, we can assume that both $K$ and $h$ are positive in some point and non-negative. In this case, $\X_G = \X^1_G =\X^2_G = H^1_G(\D)$.

By Proposition \ref{Coercivity}, there exists $(\hat{u}, \hat{\rho}) \in H^1_G(\D) \times [0,2\pi]$ a minimizer for $I$. We conclude if we exclude the possibilities $\hat{\rho}=0$ or $\hat{\rho}=2\pi$. 

Assume that $\hat{\rho}=0$. Observe that in this case, $\hat{u}$ is a minimizer for $I(\cdot, 0)$. Then,

\begin{align*} I(\hat{u},0) \leq I(\hat{u}, \rho) = I(\hat{u},0)  -2 \rho \log \Big (\int_{\D} K e^{\hat{u}} \Big ) + 4 \rho \log \Big (\int_{\Sph} h e^{\hat{u}/2} \Big ) \\ + 8 \pi \log \Big (\frac{2\pi-\rho}{2\pi} \Big)- 4 \rho \log (2 \pi- \rho) + 2 \rho + 2 \rho \log \rho.\end{align*}
But observe that, as $\rho \to 0$, the main term above is $2 \rho \log \rho$, which is negative. This gives a contradiction that excludes the case $\hat{\rho}=0$. One can exclude the case $\hat{\rho} = 2 \pi$ in an analogous way.

\end{proof}

The proof of Theorem \ref{TheoremNonNegative} can be adapted to a more general setting as follows:
\begin{theorem}\label{TheoremGeneral} Let $K$ and $h$ be H\"{o}lder continuous $G-$symmetric functions that are positive somewhere. We define
\begin{align*}
S_0 = \left\lbrace u\in \X^1_G:\: I_0(u)=\min_{\X^1_G} I_0  \right\rbrace, \hsp S_{2\pi} = \left\lbrace u\in \X^2_G:\: I_{2\pi}(u)=\min_{\X^2_G} I_{2\pi} \right\rbrace.
\end{align*}
If $S_0\cap\X_G$ and $S_{2\pi}\cap\X_G$ are nonempty, then \eqref{Probl} admits a solution.
\end{theorem}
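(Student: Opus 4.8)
The plan is to repeat the minimization scheme used for Theorem \ref{TheoremNonNegative}, but to upgrade the exclusion of the endpoints $\rho = 0$ and $\rho = 2\pi$ so that it works even when $K$ or $h$ is allowed to vanish somewhere (so that $\X_G$, $\X^1_G$, $\X^2_G$ need no longer coincide). First I would invoke Propositions \ref{LowerBound} and \ref{Coercivity}: since $K$ and $h$ are $G$-symmetric and positive somewhere, $\X_G$ is nonempty, $I$ is bounded below on $\X_G \times [0,2\pi]$ and $I_\rho$ is coercive on $\X_G$ uniformly in $\rho$ (the coercivity constant $C_1$ in \eqref{equa16} does not depend on $\rho$). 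Hence, taking a minimizing sequence $(u_n,\rho_n)$, up to subsequence $\rho_n \to \hat\rho \in [0,2\pi]$ and $u_n \weakto \hat u$ in $H^1_G(\D)$; by weak lower semicontinuity of the Dirichlet energy and strong $H^1(\D) \hookrightarrow L^q$ / trace compactness for the exponential and boundary terms, together with the fact that the denominators $\int_\D K e^{u_n}$ and $\int_\Sph h e^{u_n/2}$ stay bounded away from $0$ (otherwise $I \to +\infty$, as in Theorem \ref{TheoremGeodesic}), one gets that $(\hat u,\hat\rho)$ is a minimizer of $I$ on $\X_G \times [0,2\pi]$.

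The heart of the argument is then to rule out $\hat\rho \in \{0,2\pi\}$. Suppose $\hat\rho = 0$. Then for every fixed $u \in \X_G \subset \X^1_G$ and every $\rho \in (0,2\pi)$ we have, exactly as in the proof of Theorem \ref{TheoremNonNegative},
\begin{align*}
I(u,\rho) = I(u,0) - 2\rho \log\Big(\int_\D K e^{u}\Big) + 4\rho \log\Big(\int_\Sph h e^{u/2}\Big) + 8\pi\log\Big(\tfrac{2\pi-\rho}{2\pi}\Big) - 4\rho\log(2\pi-\rho) + 2\rho + 2\rho\log\rho,
\end{align*}
and the dominant term as $\rho \to 0^+$ is $2\rho\log\rho \to 0^-$, so $I(u,\rho) < I(u,0)$ for $\rho$ small. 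Now here is the point where the extra hypothesis enters: pick $u \in S_0 \cap \X_G$, which is nonempty by assumption. Then $I(u,0) = \min_{\X^1_G} I_0 = I(\hat u, 0) = \min I$ (the last equality because $\hat\rho=0$ forces $\hat u$ to minimize $I(\cdot,0)$ over $\X_G \subset \X^1_G$, and $\min_{\X^1_G} I_0 \le I(\hat u,0) \le I(u,0) = \min_{\X^1_G} I_0$). But then $I(u,\rho) < I(u,0) = \min I$ for small $\rho > 0$, contradicting minimality of $(\hat u, \hat\rho)$ over $\X_G \times [0,2\pi]$. Hence $\hat\rho \ne 0$; symmetrically, using $S_{2\pi} \cap \X_G \ne \emptyset$ and the expansion of $I(u,\rho)$ around $\rho = 2\pi$ (whose leading correction is $2(2\pi-\rho)\log(2\pi-\rho) \to 0^-$), one excludes $\hat\rho = 2\pi$. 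Therefore $\hat\rho \in (0,2\pi)$, $(\hat u,\hat\rho)$ is an interior critical point of $I$, and by the equivalence established in Section \ref{sect2} it yields a solution of \eqref{Probl}.

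The step I expect to be the genuine obstacle is precisely the endpoint exclusion: in Theorem \ref{TheoremNonNegative} one could test with the minimizer $\hat u$ itself because $\X_G = \X^1_G = \X^2_G$, but once $K$ may vanish somewhere it is no longer clear that the global minimizer $\hat u$ of $I(\cdot,0)$ over $\X_G$ is still a minimizer over the larger space $\X^1_G$ — and the comparison above needs a point of $\X_G$ that realizes the $\X^1_G$-infimum. This is exactly what the hypothesis $S_0 \cap \X_G \neq \emptyset$ (resp. $S_{2\pi} \cap \X_G \neq \emptyset$) supplies, so the proof goes through verbatim with $u$ chosen in that intersection. The remaining details (uniform coercivity, lower semicontinuity, keeping the exponential integrals bounded below along the minimizing sequence, and preservation of $G$-symmetry under weak limits) are routine repetitions of the arguments already given for Proposition \ref{Coercivity} and Theorem \ref{TheoremGeodesic}.
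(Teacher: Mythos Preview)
Your overall strategy matches the paper's, but there is a genuine gap in the step where you assert that ``the denominators $\int_\D K e^{u_n}$ and $\int_\Sph h e^{u_n/2}$ stay bounded away from $0$ (otherwise $I \to +\infty$)''. This is only correct when $\hat\rho\in(0,2\pi)$. If $\rho_n\to 0$, the term $-2\rho_n\log\int_\D K e^{u_n}$ can remain bounded even when $\int_\D K e^{u_n}\to 0^+$ (take e.g.\ $\rho_n=1/n$ and $\int_\D K e^{u_n}=e^{-n}$); symmetrically for $\int_\Sph h e^{u_n/2}$ when $\rho_n\to 2\pi$. Hence the weak limit $\hat u$ need not lie in $\X_G$, and your subsequent chain ``$\hat\rho=0$ forces $\hat u$ to minimize $I(\cdot,0)$ over $\X_G$'' and ``$I(\hat u,0)=\min I$'' is not justified: $(\hat u,0)$ may not even be a point of the domain $\X_G\times[0,2\pi]$.

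The paper repairs exactly this point by working at the level of the sequence rather than passing to a limit in $\X_G$. Assuming $\rho_n\to 0$, one first shows (as you do) that $\int_\Sph h e^{u_n/2}>\varepsilon>0$, so $u_n\in\X^1_G$. Since $u_n$ is bounded in $H^1$, $\int_\D K e^{u_n}$ is bounded \emph{above}, whence $\liminf_n\bigl(-2\rho_n\log\int_\D K e^{u_n}\bigr)\geq 0$; the remaining correction terms in your expansion of $I(u_n,\rho_n)-I(u_n,0)$ tend to $0$. This yields
\[
\inf I \;=\; \lim_n I(u_n,\rho_n)\;\geq\;\liminf_n I(u_n,0)\;\geq\;\min_{\X^1_G} I_0\;=\; I(u_0,0),
\]
with $u_0\in S_0\cap\X_G$. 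Now your energy comparison gives $I(u_0,\rho)<I(u_0,0)\leq\inf I$ for small $\rho>0$, which is the desired contradiction since $(u_0,\rho)\in\X_G\times(0,2\pi)$. The case $\rho_n\to 2\pi$ is symmetric. With this modification your argument is complete and coincides with the paper's.
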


Clearly, Theorem \ref{TheoremNonNegative} is an immediate consequence of Theorem \ref{TheoremGeneral}. Notice also that the sets $S_0$ and $S_{2\pi}$ of the hypotheses are nonempty because of Theorems \ref{TheoremGeodesic} and \ref{TheoremGaussian}.

\begin{proof}
The proof follows the same energy comparison argument than above, but a couple of details are worth to be written down. First, the existence of a minimizer is not clear a priori. Then let $(u_n, \rho_n ) \in \X_G \times (0,2\pi)$ be a minimizing sequence, that is, $I(u_n, \rho_n) \to \inf I$. Clearly $u_n$ is bounded in $H^1_G(\D)$ by Proposition \ref{Coercivity}, but its weak limit $\hat{u}$ could fall outside $\X_G$.

If $\rho_n \to \hat{\rho} \in (0,2\pi)$, from the fact that $I(u_n, \rho_n)$ is bounded we obtain:

$$ 0< \e < \int_{\D} K e^{u_n} < C, \ \ 0< \e < \int_{\Sph} h e^{u_n/2}< C,$$
for some $\e>0$, $C>0$. As a consequence $u_n \rightharpoonup \hat{u} \in \X_G$ and we are done. 

\medskip Assume now that $\rho_n \to 0$. If $n$ is sufficiently large we have the estimate:

$$I(u_n,\rho_n)\geq -2\rho_n \log\left(\int_\D Ke^{u_n}\right)-4(2\pi-\rho_n)\log\left(\int_\Sph he^{u_n/2}\right)+C.$$
Notice that $$\liminf\limits_{n\to\infty}-2\rho_n \log\left(\int_\D Ke^{u_n} \right)\geq 0.$$ Thus, $-\log\left(\int_\Sph h e^{u_n/2}\right)$ must be bounded from above, which means that $$0<\e<\int_\Sph he^{u_n/2}.$$
%Again by energy estimates, 
%
%$$ 0< \e < \int_{\Sph} h e^{u_n/2}<M,$$
%
%but $\int_\D K e^{u_n}$ could converge to $0$ a priori. In any case we can write:
Now, we write:
\begin{align*}I(u_n,\rho_n) = I(u_n,0)  -2 \rho_n \log \Big (\int_{\D} K e^{u_n} \Big ) + 4 \rho_n \log \Big (\int_{\Sph} h e^{u_n/2} \Big ) \\+ 8 \pi \log \Big (\frac{2\pi-\rho_n}{2\pi} \Big)- 4 \rho_n \log (2 \pi- \rho_n) + 2 \rho_n + 2 \rho_n \log \rho_n.\end{align*}
From this we deduce that:

$$ \inf I = \lim_{n \to \infty} I(u_n,\rho_n) \geq \liminf_{n \to \infty}  I(u_n,0) \geq I(u_0,0),$$
where $u_0 \in S_0 \cap \X_G$. But, as in the proof of Theorem \ref{TheoremNonNegative},

$$I(u_0,0) > I(u_0, \rho),$$
for small values of $\rho$. This contradiction shows that $\rho_n $ cannot converge to $0$. In an analogous way we can exclude its convergence to $2 \pi$.

\end{proof}

\section{A perturbation result}
\label{sect4}
\setcounter{equation}{0}

In this section it is necessary to specify the dependence of $I$ on the curvature functions $K$ and $h$, so we are writting $I(u, \rho) = I[K,h](u,\rho)$. We begin with a compactness result:

\begin{lemma}\label{Compactness} Let $(K_n)$ and $(h_n)$ be sequences of  H\"{o}lder continuous $G-$symmetric functions, defined on $\D$ and $\Sph$ respectively, such that
\begin{align*}
K_n &\to K \hsp \mbox{uniformly in } \D \mbox{ and } K\in C^{0,\alpha}(\D), \\
h_n &\to h \hsp \mbox{uniformly on } \Sph \mbox{ and } h\in C^{0,\alpha}(\Sph).
\end{align*}
Let us consider a sequence $(u_n)$, where each $u_n$ is a solution of the problem 
\begin{align} \label{Probl6}
\left\lbrace \begin{array}{ll} -\Delta u = 2K_n e^u & \mbox{in } \D, \\ \frac{\partial u}{\partial n}+2 = 2h_ne^{u/2} & \mbox{on } \Sph, \end{array}\right.
\end{align}
satisfying
\begin{align}
\rho_n = \int_\D K_n e^{u_n} > 0,\hsp \int_\Sph h_n e^{u_n/2} > 0, \hsp \forall n\in\N. \label{equa17}
\end{align}
Assume that $I[K_n,h_n](u_n, \rho_n)$ is uniformly bounded from above. Then $u_n\weakto u_\infty$ on $H^1(\D),$ being $u_\infty$ a solution of the problem 
\begin{align}
\left\lbrace \begin{array}{ll} -\Delta u = 2K e^u & \mbox{in } \D, \\ \frac{\partial u}{\partial n}+2 = 2he^{u/2} & \mbox{on } \Sph. \end{array}\right. \label{Probl7}
\end{align}
\end{lemma}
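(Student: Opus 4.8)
The plan is to establish an a priori bound $\|u_n\|_{H^1(\D)}\le C$, extract a weakly convergent subsequence $u_n\weakto u_\infty$, and pass to the limit in the weak formulation of \eqref{Probl6}. First, integrating the interior equation in \eqref{Probl6} and using the divergence theorem together with $|\Sph|=2\pi$ gives $\rho_n=2\pi-\int_\Sph h_n e^{u_n/2}$, so that \eqref{equa17} forces $\rho_n\in(0,2\pi)$ for every $n$; this Gauss--Bonnet balance will be decisive. Since \eqref{Probl6} is, by the argument proving the equivalence of \eqref{Probl} and \eqref{Probl3}, the Euler--Lagrange system of $I[K_n,h_n]$, and since $\int_\D K_n e^{u_n}=\rho_n$ and $\int_\Sph h_n e^{u_n/2}=2\pi-\rho_n$, substituting these into \eqref{DefFunctional} cancels the logarithmic terms against the correction term and leaves
\begin{equation*}
I[K_n,h_n](u_n,\rho_n)=\tfrac12\int_\D|\nabla u_n|^2+2\int_\Sph u_n+2\rho_n .
\end{equation*}

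Next I would bound $\|u_n\|_{H^1(\D)}$. Set $v_n:=u_n-\fint_\D u_n$, which has $\int_\D v_n=0$, is $G$-symmetric, and still satisfies $\int_\D K_n e^{v_n}>0$, $\int_\Sph h_n e^{v_n/2}>0$. Because $K_n\to K$ and $h_n\to h$ uniformly, $\|K_n\|_{L^\infty}$ and $\|h_n\|_{L^\infty}$ are uniformly bounded, so the coercivity estimate \eqref{equa16} in the proof of Proposition \ref{Coercivity} holds with constants $C_1,C_2$ independent of $n$ and of $\rho_n$; applied to $v_n$, together with the invariance of $I$ under constants, it yields $I[K_n,h_n](u_n,\rho_n)\ge C_1\|v_n\|_{H^1(\D)}^2-C_2\|v_n\|_{H^1(\D)}+C$, and the assumed upper bound gives $\|v_n\|_{H^1(\D)}\le C$, i.e.\ $\|\nabla u_n\|_{L^2(\D)}\le C$. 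It remains to bound the mean $m_n:=\fint_\D u_n$. From the displayed identity, $\int_\D|\nabla u_n|^2\ge 0$ and $\rho_n\le 2\pi$ we get $\int_\Sph u_n\le C$, hence $m_n\le C$ since $\int_\Sph u_n=2\pi m_n+\int_\Sph v_n$ and $|\int_\Sph v_n|\le C\|v_n\|_{H^1(\D)}$ by the trace inequality. For a lower bound, suppose $m_n\to-\infty$ along a subsequence; applying \eqref{equa4} to $v_n$ and Proposition \ref{MosTrudBoundWeak} to $v_n/2$ and using $\fint_\D v_n=0$, $\|\nabla v_n\|_{L^2}\le C$, $|\fint_\Sph v_n|\le C$ gives $\int_\D e^{v_n}\le C$ and $\int_\Sph e^{v_n/2}\le C$; therefore $\int_\D e^{u_n}=e^{m_n}\int_\D e^{v_n}\to 0$ and $\int_\Sph e^{u_n/2}=e^{m_n/2}\int_\Sph e^{v_n/2}\to 0$, whence $\rho_n=\int_\D K_n e^{u_n}\to 0$ and $2\pi-\rho_n=\int_\Sph h_n e^{u_n/2}\to 0$ simultaneously, contradicting $\rho_n+(2\pi-\rho_n)=2\pi$. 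Thus $m_n$ is bounded and $\|u_n\|_{H^1(\D)}\le C$.

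Then, up to a subsequence, $u_n\weakto u_\infty$ in $H^1(\D)$, and by Rellich's theorem and compactness of the trace, $u_n\to u_\infty$ in $L^q(\D)$ and in $L^q(\Sph)$ for every $q<\infty$, and a.e. Applying \eqref{equa4} to $q u_n$ and Proposition \ref{MosTrudBoundWeak} to $\tfrac q2 u_n$, and using that $\|\nabla u_n\|_{L^2}$ and the means of $u_n$ are bounded, shows $e^{u_n}$ is bounded in $L^q(\D)$ and $e^{u_n/2}$ in $L^q(\Sph)$ for every $q<\infty$; with the a.e.\ convergence, Vitali's theorem gives $e^{u_n}\to e^{u_\infty}$ in $L^p(\D)$ and $e^{u_n/2}\to e^{u_\infty/2}$ in $L^p(\Sph)$ for every $p<\infty$. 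Passing to the limit in
\begin{equation*}
\int_\D\nabla u_n\cdot\nabla\varphi=2\int_\D K_n e^{u_n}\varphi+2\int_\Sph h_n e^{u_n/2}\varphi-2\int_\Sph\varphi,\qquad\varphi\in H^1(\D),
\end{equation*}
the left side tends to $\int_\D\nabla u_\infty\cdot\nabla\varphi$ by weak convergence, while $\int_\D K_n e^{u_n}\varphi\to\int_\D K e^{u_\infty}\varphi$ (using $K_n\to K$ uniformly, $e^{u_n}\to e^{u_\infty}$ in $L^2(\D)$, and $\varphi\in L^2(\D)$) and likewise $\int_\Sph h_n e^{u_n/2}\varphi\to\int_\Sph h e^{u_\infty/2}\varphi$. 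Hence $u_\infty$ is a weak solution of \eqref{Probl7}, and standard elliptic regularity (using $K,h\in C^{0,\alpha}$) makes it classical.

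The main difficulty is the bound on the additive constant $m_n=\fint_\D u_n$ in the second step: Proposition \ref{Coercivity} only controls the oscillation $u_n-\fint_\D u_n$, and nothing prevents a priori the constant part from running off to $\pm\infty$ — this is the non-compactness of the conformal group manifesting itself. What rescues the argument is that the two curvature integrals add up to $2\pi$: the value $m_n\to-\infty$ would send both to zero, while the upper bound is read off the explicit form of the functional; the Moser--Trudinger and Lebedev--Milin inequalities serve only to transfer information from the bounded piece $v_n$ to the exponential integrals.
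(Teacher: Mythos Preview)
Your proof is correct and follows the same overall strategy as the paper: bound $\|u_n\|_{H^1}$ via the coercivity estimate \eqref{equa16}, extract a weak limit, and pass to the limit in the weak formulation of \eqref{Probl6}. The passage to the limit and the regularity conclusion match the paper's argument essentially line by line.

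The one genuine difference is your treatment of the additive constant. The paper invokes \eqref{equa16} directly to conclude that $\|u_n\|_{H^1}$ is bounded, but since $I$ is invariant under addition of constants, that inequality (derived under the normalization $\int_\D u_n=0$) only controls $v_n=u_n-\fint_\D u_n$. You make this explicit and then supply a separate argument for $m_n=\fint_\D u_n$: you compute the clean identity
\[
I[K_n,h_n](u_n,\rho_n)=\tfrac12\int_\D|\nabla u_n|^2+2\int_\Sph u_n+2\rho_n
\]
from the Gauss--Bonnet balance $\rho_n+(2\pi-\rho_n)=2\pi$, read off an upper bound on $m_n$, and rule out $m_n\to-\infty$ by the neat observation that it would force both $\rho_n\to 0$ and $2\pi-\rho_n\to 0$. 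This extra step is not spelled out in the paper, and your argument via the exact constraint $\rho_n+\int_\Sph h_n e^{u_n/2}=2\pi$ is a tidy way to close it; what it buys is an honest $H^1$ bound on the actual solutions $u_n$ of \eqref{Probl6} (which are \emph{not} translation-invariant), rather than only on their zero-mean parts.
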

\begin{proof} First, we notice that  $\Vert K_n - K\Vert_\infty \to 0$ and $\Vert h_n - h\Vert_\infty \to 0$ imply that for every $\e > 0$ there exists $n_0\in\N$ such that, for $n\geq n_0$: $$\Vert K_n \Vert_\infty < \Vert K \Vert_\infty+\e,\hsp \Vert h_n \Vert_\infty < \Vert h \Vert_\infty+\e.$$
Hypothesis \eqref{equa17} gives us $0< \rho_n < 2\pi$ for all $n\in\N$. Then, for $n\geq n_0$ we have the following bound:
\begin{equation*}
I[K_n, h_n](u_n,\rho_n) \geq I(\Vert K\Vert_\infty + \e, \Vert h\Vert_\infty + \e)(u_n,\rho_n) .
\end{equation*}
And then, by Proposition \ref{Coercivity}, there exist constants $C_1, C_2 >0$, independent of $n$, such that 
\begin{align*}
I[K_n, h_n](u_n,\rho_n)\geq C_1\Vert u_n\Vert_{H^1}^2-C_2\Vert u_n\Vert_{H^1}+C_\e.
\end{align*}
Taking into account the hypothesis that $I[K_n, h_n](u_n,\rho_n)$ is uniformly bounded from above we have immediately that $u_n$ is bounded in the $H^1(\D)$ norm. Hence, up to a subsequence we can assume that there exists $u_\infty\in H^1(\D)$ such that $u_n\weakto u_\infty$.
	
Then, it is known that $2K_ne^{u_n}\to 2Ke^{u_\infty}$ and $2h_n e^{u_n/2} \to 2he^{\frac{u_\infty}{2}}$ on $L^p$ for $1\leq p < +\infty$, and that $\langle \nabla u_n, w \rangle\to \langle \nabla u_\infty, w\rangle$ for all $w\in H^1(\D)$. In particular $$ u_n|_{\Sph} \to  u_\infty|_{\Sph} \ \mbox{ in } L^2(\Sph).$$
We now pass to the limit in the weak formulation of \eqref{Probl6}:
\begin{align}
\int_\D \langle\nabla u_n ,\nabla v\rangle -2  \int_\D K_n e^{u_n} v +2 \int_\Sph v -\int_\Sph h_ne^{u_n/2} v = 0. \label{WeakSol}
\end{align}
for all $v\in H^1(\D)$. As a consequence $u_{\infty}$ is a weak solution of \eqref{Probl7}. By standard regularity estimates $u_\infty$ is indeed a classical solution.

\end{proof}
%\begin{remark} If either $K_n$ or $h_n$ are identically zero, then the statement is still valid setting $\rho_n = 0$ or $\rho_n = 2\pi$ respectively and removing in \eqref{equa17} the condition relative to the null sequence.
%\end{remark}
The next step is to check that, when considering a sequence of minimum type solutions, the hypothesis of Lemma \ref{Compactness} are automatically satisfied. Observe that under our hypotheses $I[K_n,h_n](\cdot,\cdot)\to I[K,h](\cdot,\cdot)$ pointwise in $\X\times (0,2\pi)$. 
%\begin{align*}
%\left|\int_\D K_ne^u - \int_\D Ke^u\right| &\leq \Vert K_n-K\Vert_\infty \int_\D e^u \to 0,\hsp \forall u\in\X \\ \left|\int_\Sph h_ne^{\frac{u}{2}} - \int_\Sph he^{\frac{u}{2}}\right| &\leq \Vert h_n-h\Vert_\infty \int_\Sph e^{\frac{u}{2}} \to 0,\hsp \forall u\in\X
%\end{align*}
%and by continuity of the $\log$
%\begin{align*}
%&\left|I(K_n,h_n)(u,\rho)-I(K,h)(u,\rho) \right| \leq 2\rho\left|\log \int_\D K_n e^u - \log \int_\D K e^u \right|+ \\ &+ 4(2\pi-\rho)\left| \log\int_\Sph h_n e^{\frac{u}{2}} - \log\int_\Sph he^{\frac{u}{2}} \right|\to 0,\hsp \forall (u,\rho)\in \X\times(0,2\pi)
%\end{align*}

Then, if $(u_n,\rho_n)$ is a sequence of minimum type solutions of \eqref{Probl6}, $$\limsup_{n\to+\infty}I[K_n,h_n](u_n,\rho_n)= \limsup_{n\to+\infty}\min_{\X\times(0,2\pi)}I[K_n,h_n](\cdot,\cdot)\leq \min_{\X\times(0,2\pi)}I.$$
Where the previous inequality is due to the fact that $(f_n)$ converging pointwise to $f$ implies
$\lim_{n\to+\infty}\inf f_n(y)\leq \inf f(y)$.
\begin{proof}[Proof of Theorem \ref{TheoremPerturbed}] We apply Theorem \ref{TheoremGeneral} to the problems
\begin{align*}
\left\lbrace \begin{array}{ll} -\Delta u = 2K e^u & \mbox{in } \D, \\[0.15cm] \frac{\partial u}{\partial \eta}+2 = 2he^{u/2} & \mbox{on } \Sph \end{array}\right.
\end{align*}
for which we need that the limiting problems
\begin{align*}
(P^1_{K})\left\lbrace \begin{array}{ll} -\Delta u = 2K e^u & \mbox{in } \D \\ \frac{\partial u}{\partial n}+2 = 0 & \mbox{on } \Sph \end{array}\right.,\hsp (P^2_{h})\left\lbrace \begin{array}{ll} -\Delta u = 0 & \mbox{in } \D \\ \frac{\partial u}{\partial \eta}+2 = 2he^{u/2} & \mbox{on } \Sph \end{array}\right.
\end{align*}
admit minimum type solutions, $u_1$ and $u_2$ respectively, verifying 
$$\int_\D Ke^{u_2}>0,\hsp \int_\Sph he^{u_1/2}>0.$$ 

By contradiction, take $K_n$ and $h_n$  H\"{o}lder continuous functions converging uniformly to $K_0$ and $h_0$. We can assume that $n$ is large enough so that $K_n$ and $h_n$ are somewhere positive, so that solutions for the limiting problems in the form of minimizers can be found via Theorems \ref{TheoremGeodesic} and \ref{TheoremGaussian}. Now, take $(\tilde{u}_n)$ a sequence of minimum type solutions of the problems $(P^1_{K_n})$ and $(\hat{u}_n)$ a sequence of minimum type solutions of the problems $(P^2_{h_n})$ such that
\begin{equation}\label{equa18}
\mbox{either } \int_\D K_n e^{\hat{u}_n}\leq 0, \ \mbox{ or }\ \int_\Sph h_ne^{\tilde{u}_{n}/2}\leq 0, \hsp \forall n\in \N.
\end{equation}
By Lemma \ref{Compactness} we know that $\hat{u}_n\weakto\hat{u}$ and $\tilde{u}_n\weakto\tilde{u}$, solutions for the limiting problems $(P^1_{K_0})$ and $(P^2_{h_0})$. Taking limit when $n\to +\infty$ in \eqref{equa18} we obtain: 

$$ \mbox{either }  \int_\D K_0\,e^{\tilde{u}}\leq 0, \ \mbox{ or }\ \int_\Sph h_0\,e^{\hat{u}/2}\leq 0,$$ which is a contradiction since both $K_0$ and $h_0$ are nonnegative functions somewhere positive.
\end{proof}

\end{document}